\documentclass[twoside,a4paper,11pt,limits]{amsart}
\usepackage{graphicx}
\usepackage{dsfont}

\usepackage{srcltx}
\usepackage{amssymb}
\usepackage{amsmath}
\usepackage{amssymb}
\usepackage{amsthm}
\usepackage{bbm}
\usepackage{hyperref}
\hypersetup{colorlinks=true,linkcolor=blue,citecolor=red}
\allowdisplaybreaks

\usepackage[hyperref,comment,enumitem]{paper_diening}

\newtheorem{theorem}{Theorem}[section]
\newtheorem{lemma}[theorem]{Lemma}

\newtheorem{remark}[theorem]{Remark}

\newtheorem{definition}[theorem]{Definition}

\numberwithin{equation}{section}

\newcommand{\dint}{\dashint}

\newcommand{\Mcal}{{\mathcal{M}}}

\newcommand{\pk}[1]{{#1}}

\newcommand{\dt}{\ensuremath{\,{\rm d} t}}
\newcommand{\ds}{\ensuremath{\,{\rm d} s}}

\newcommand{\dx}{\ensuremath{\,{\rm d} x}}
\newcommand{\dy}{\ensuremath{\,{\rm d} y}}
\newcommand{\dz}{\ensuremath{\,{\rm d} z}}









%



\setlength{\textwidth}{155mm}
\setlength{\oddsidemargin}{5.0mm}
\setlength{\evensidemargin}{5.0mm}
\setlength{\topmargin}{-6.0mm}
\setlength{\textheight}{236mm}

\newcommand{\seb}[1]{{#1}}

\setlength{\textheight}{238mm}

\begin{document}

\title[Higher integrability for parabolic systems]{On global $L^q$ estimates for systems with $p$-growth in rough domains}
\thanks{M.~Bul\'{\i}\v{c}ek's and S.~Schwarzacher's work is supported by the project LL1202  financed by the Ministry of Education, Youth and Sports,
Czech Republic and by the University Centre for Mathematical Modelling, Applied Analysis and Computational Mathematics (Math~MAC).
M.~Bul\'{\i}\v{c}ek is a  member of the Ne\v{c}as Center for Mathematical Modeling}

\author[M.~Bul\'{\i}\v{c}ek]{Miroslav Bul\'{\i}\v{c}ek} 
\address{Mathematical Institute, Faculty of Mathematics and Physics, Charles University,
Sokolovsk\'{a} 83, 186 75 Prague, Czech Republic}
\email{mbul8060@karlin.mff.cuni.cz}

\author[S.~Byun]{Sun-Sig Byun}
\address{Department of Mathematical Sciences and Research Institute of Mathematics, Seoul National University,
Seoul 08826, Korea}
\email{byun@snu.ac.kr}

\author[P.~Kaplick\'{y}]{Petr Kaplick\'{y}}
\address{Department of Mathematical Analysis, Faculty of Mathematics and Physics,  Charles University,
Sokolovsk\'{a} 83, 186 75 Prague, Czech Republic}
\email{kaplicky@karlin.mff.cuni.cz}

\author[J.~Oh]{Jehan Oh}
\address{Fakult\"{a}t f\"{u}r Mathematik, Universit\"{a}t Bielefeld,
Postfach 100131, D-33501 Bielefeld, Germany}
\email{joh@math.uni-bielefeld.de}

\author[S.~Schwarzacher]{Sebastian Schwarzacher}
\address{
Institute of applied mathematics, University of Bonn, Ende
nicher Allee 60, 53115 Bonn, Germany \&
Department of Analysis, Faculty of Mathematics and Physics,  Charles University,
Sokolovsk\'{a} 83, 186 75 Prague, Czech Republic
}
\email{schwarz@karlin.mff.cuni.cz}

\begin{abstract}
We study regularity results for nonlinear parabolic systems of $p$-Laplacian type with
inhomogeneous boundary and initial data, with $p\in
(\frac{2n}{n+2},\infty)$. We show bounds on the gradient of
solutions in the Lebesgue-spaces with arbitrary large integrability
exponents and natural dependences on the right hand side and the
boundary data. In particular, we provide a new proof of the global
non-linear Calder\'{o}n--Zygmund theory for
such systems. This extends the
global result of~\cite{Bog14} to very rough domains and more general
boundary values. Our method makes use of direct estimates on the
solution minus its boundary values and hence is considerably shorter than the available higher
integrability results. Technically interesting is the fact that our
parabolic estimates have no scaling deficit with respect to the
leading order term. Moreover, in the singular case,
$p\in(\frac{2n}{n+2},2]$, any scaling deficit can be omitted.
\end{abstract}

\keywords{asymptotically Uhlenbeck problem, nonlinear parabolic system, gradient estimate, Reifenberg domain}
\subjclass[2010]{35K51, 35K55, 35B65, 35A01}

\maketitle

\section{Introduction}
\label{S1}

\noindent We are interested in higher integrability results for
solutions of the following nonlinear problem: for given data, i.e.,
an $n$-dimensional domain $\Omega
\subset \mathbb{R}^n$ with $n\ge 2$, a length of the time interest
$T>0$,  a source term $f:(0,T)\times \Omega \to \mathbb{R}^{n\times
N}$ with $N \in \mathbb{N}$, an initial condition $u_0:\Omega\to
\setR^N$, a given lateral boundary
condition $g_0:(0,T)\times\partial\Omega\to \setR^N$  and a mapping
$A:\mathbb{R} \times \mathbb{R}^n \times \mathbb{R}^{n\times N}\to
\mathbb{R}^{n\times N}$, we aim to
find a function $u:[0,T)\times\Omega \to \mathbb{R}^N$ satisfying
\begin{align}
\label{eq:sysA}
\left\lbrace \ \begin{aligned}
  \partial_tu-\divergence A(t,x, \nabla u) &= -\divergence
  f &&\textrm{ in }(0,T)\times \Omega,\\
  u&=g_0&&\textrm{ on }(0,T)\times \partial \Omega,\\
  u(0,\cdot)&=u_0(\cdot) &&\textrm{ in } \Omega.
  \end{aligned} \right.
\end{align}
Owing to a significant number of problems originating in various applications, it is natural to require that $A$ is a Carath\'{e}odory mapping, satisfying the $p$-coercivity, the $(p-1)$-growth and the (strict) monotonicity conditions.
It means that
\begin{align}
&A(\cdot,\cdot,\eta) \textrm{ is measurable for any fixed } \eta\in \mathbb{R}^{n\times N},\label{Car1}\\
&A(t,x,\cdot) \textrm{ is continuous for almost all } (t,x) \in \mathbb{R} \times \mathbb{R}^n, \label{Car2}
\end{align}
and for some $p\in (1,\infty)$, there exist positive constants $c_1$ and $c_2$ such that for almost all $(t,x) \in \mathbb{R} \times \mathbb{R}^n$ and all $\eta_1,\eta_2 \in \mathbb{R}^{n\times N}$
\begin{align}
\label{coercivityJM}
c_1|\eta_1|^p-c_2 &\le A(t,x,\eta_1)\cdot \eta_1 &&\textrm{$p$-coercivity},\\
|A(t,x,\eta_1)|&\le c_2(1+|\eta_1|)^{p-1}&&\textrm{$(p-1)$-growth},\label{growthJM}\\
0&\le (A(t,x,\eta_1)-A(t,x,\eta_2))\cdot (\eta_1-\eta_2)&&\textrm{monotonicity}.\label{monotoneJM}
\end{align}
If for all $\eta_1\neq \eta_2$ the inequality \eqref{monotoneJM} is strict, then $A$ is said to be strictly monotone.

The leading motive of the paper is to find assumptions on the data,
i.e., $\partial \Omega$, $A$,
$u_0$, $g_0$ and $f$, that would guarantee not only the existence of
a weak solution $u\in L^p(0,T;W^{1,p}(\Omega; \mathbb{R}^N))$ but
also its higher integrability in the sense that for
any $q \in [1,\infty)$ depending on the data, we have
\begin{equation}
u\in L^{pq}(0,T;W^{1,pq}(\Omega; \mathbb{R}^N)). \label{task}
\end{equation}

Immediately, there appear two key difficulties when one
intends to establish the validity
of \eqref{task}: the nonlinear and vectorial structure of the
operator $A$ and the roughness of the domain $\Omega$, which is
supposed to be just an open bounded set. Both these difficulties
will finally lead to some restrictions that we shall describe below.

To introduce the novelty of our results, we give a short history on
the derivation of \eqref{task} for the model elliptic case. Let
$g\in W^{1,p}(\Omega;\setR^N)$ and $\Omega$ be an open
bounded domain such that
\begin{align} \label{p-lap-eq}
\left\lbrace \ \begin{aligned} -\divergence(\abs{\nabla
u}^{p-2}\nabla u)&=0\text{ in }\Omega
\\
u&=\pk{g}\text{ on }\partial\Omega.
\end{aligned}\right.
\end{align}
It is classical that
\[
u=\argmin_{v\in \pk{g}+ W^{1,p}_0(\Omega)}\frac1p\int_\Omega
\abs{\nabla v}^p\, dx
\]
is the unique solution to the system \eqref{p-lap-eq}. After the
seminal works of Ural'ceva~\cite{Ura68} and Uhlenbeck~\cite{Uhl77},
solutions to systems of \eqref{p-lap-eq} are known to be locally in
$C^{1,\alpha}$ for some $\alpha \in
(0,1)$. In the scalar case, this was extended up to the boundary by
Lieberman~\cite{Lie88}, provided $u_0\in
C^{1,\alpha}(\overline{\Omega})$. The result by Lieberman was the
starting point for many results in the {\em scalar} case. In
particular, higher integrability~\eqref{task} was shown for various generalizations of
the non-linearity, the coefficients,
the right hand sides and the
boundary regularity~\cite{AceM05, Mi1, CM1, KinZho01, ByunOhWang15,
ByunOk16, ByunPala14, ByunWangZhou07, MengPhuc12, Ph1, PalaSoft11}.
\pk{All these results} relay on the so called comparison principle that is
in the heart of the {\em non-linear \Calderon{} Zygmund theory}
first developed by Iwaniec~\cite{Iwa82,Iwa83}, see
also~\cite{CafPer98}. Later, building on the seminal works of
DiBenedetto and Friedman~\cite{DiBFri85,DiB93},
some higher regularity results
could be transferred to the
parabolic setting~\cite{AceMin07, Bog14, Bog15, ByunOk16SIAM,
ByunOkRyu13}.

However, as far as we are concerned,
as of now up to the boundary, $C^{1,\alpha}$ regularity is not
available for systems in case of inhomogeneous boundary
data, as the scalar techniques are
not applicable. The best regularity available for systems (elliptic
and parabolic) are global Lipschitz bounds~\cite{Bog15}, provided
the boundary of the domain $\Omega$ is smooth. These could be used
to prove global higher integrability results for quite general
operators~\cite{Bog14}.

In this paper we provide a new and independent proof of the {\em
global \Calderon{} Zygmund theory for \pk{elliptic and} parabolic systems}. In
contrast to the available higher integrability results of the
kind~\eqref{task}, we show estimates {\em directly} for the
difference $v=u-\pk{g}$. The key observation is, that $v$ satisfies the
following  PDE where the {\em inhomogeneity} of the boundary values
is transferred to a right hand
side: \begin{align*} \left\lbrace \
\begin{aligned} -\divergence(\abs{\nabla v}^{p-2}\nabla
v)&=-\divergence(\abs{\nabla v}^{p-2}\nabla v-\abs{\nabla
u}^{p-2}\nabla u) \text{ in }\Omega
\\
v&=0\text{ on }\partial\Omega.
\end{aligned}
\right.
\end{align*}
As was already pointed out by Uhlenbeck, local regularity for
homogeneous boundary data follows by the mere fact that solutions
can (locally) be reflected. Indeed, local estimates up to the
boundary for homogeneous boundary data but inhomogeneous right hand
side \pk{are in~\cite{BCDKS17}}.

Our proof of \eqref{task}  follows by combining local estimates for
homogeneous boundary values with algebraic properties of the tensor
$A$. The key estimate related to homogeneous boundary values that
may well be of independent interest is an estimate for the gradient
of the parabolic $p$-Laplacian with {\em no scaling deficit} for the
leading order terms (Theorem~\ref{thm:plaphom}).
 It can then be used to show respective estimates with {\em no scaling deficit} on
 the leading order terms for~\eqref{eq:sysA}  (see Theorem~\ref{thm:main}).
 Moreover, in the singular case, $p<2$, any scaling deficit can be omitted (see Remark~\ref{rem:main}).
 This considerably improves the estimates in \cite{Bog14} {\em quantitatively}
 which we consider to be a major scientific contribution of the present work.
 Additionally we extend the theory of the global non-linear \Calderon{} Zygmund theory
 also {\em qualitatively} by allowing more general assumptions on the regularity of the boundary
 and the boundary values in comparison to~\cite{Bog14}. Finally we wish to remark,
 that the assumptions on the non-linearity $A$ in this paper are essentially equivalent to
 the assumptions in~\cite{Bog14}.

\subsection{Assumptions on the boundary}
First, concerning the roughness of $\Omega$, we should be able to
specify at least in which sense the boundary and the initial data,
respectively, are attained. Since $\Omega$
goes beyond the Lipschitz category,
we can hardly define a trace of any Sobolev (or Bochner-Sobolev)
function by using any kind of a trace theorem and therefore we need
to proceed more carefully. Nevertheless, we can be inspired by the
elliptic setting. Indeed, if one considers the problem
$$
\left\lbrace \ \begin{aligned}
-\divergence A(x, \nabla u) & = 0 &&\textrm{in } \Omega,\\
u&=\pk{g}_0 &&\textrm{on } \partial \Omega,
\end{aligned} \right.
$$
then a concept of weak solution ``attaining" the trace $\pk{g}_0$ can be
naturally expressed as follows: to look for $u = \pk{g} + v$ with $v \in
W^{1,p}_0(\Omega; \mathbb{R}^N)$ such that the above equation is
satisfied in the sense of distribution \pk{ and $g$ is fixed such that it attains $g_0$ on $\partial\Omega$ in a suitable way.} Notice that here we
defined for any $p\in
[1,\infty)$,
$$
W^{1,p}_0(\Omega; \mathbb{R}^N):= \overline{\left\{u\in \mathcal{C}^{\infty}_0(\Omega;\mathbb{R}^N) \right\}}^{\|\cdot \|_{1,p}},
$$
where $\|\cdot \|_{1,p}$ is a standard Sobolev norm. Consequently,
we see that the natural assumption on $\pk{g}_0$ is that it must be
extendable into the interior of $\Omega$ such that \pk{its extension $g$ satisfies  $g \in
W^{1,p}(\Omega;\mathbb{R}^N)$}
integrability result, we need to improve also the integrability
of the extension $\pk{g}$ \pk{simultaneously}. In the parabolic setting, we have to face
the very similar problems on the parabolic cylinder $(0,T)\times
\Omega$, \pk{and} we also require that the initial data are attained at
least weakly.


Therefore, for the nonlinear problem \eqref{eq:sysA} having the
nonlinearity $A$ of $(p-1)$-growth, we are directly led to the
following characterization of the boundary data, which will be done
through the extension into the interior of the parabolic cylinder
$(0,T)\times \Omega$.
\begin{definition}\label{DEF:1}
Let $\tilde{p}\in (1,\infty)$ and $q\in [1,\infty)$ be arbitrary
given. We say that the couple
$(u_0,g_0)$ where $u_0:\Omega \to \mathbb{R}^N$ and $g_0:(0,T)\times
\Omega \to \mathbb{R}^N$,  is  an {\bf $\pk{(\tilde{p},q)}$-suitable
couple of boundary and initial data}, if there exists $g\in
W^{1,\tilde{p}'q}((0,T);W^{-1,\tilde{p}'q}(\Omega;\setR^N))\cap
L^{\tilde{p}q}((0,T); W^{1,\tilde{p}q}(\Omega;\setR^N))$ such that
\begin{align*}
\lim_{t\to 0}\int_{\Omega} (g(t,x)-u_0(x)) \cdot \phi(x) \dx &= 0 &&\text{ for all }\phi\in L^\infty(\Omega;\setR^N)\\
\intertext{and} \lim_{\epsilon\to
0}\dashint_{B_\epsilon(x)\cap
\Omega}\abs{g(t,y)-g_0(t,x)}\dy &=0 &&\text{ for almost all }
(t,x)\in(0,T)\times \partial \Omega.
\end{align*}
\end{definition}
Before we demonstrate that the above definition is really natural
for the setting, we consider in the paper, we shall briefly define
all symbols used in the above definition. First, we used the
notation
$$
\dashint_{Z}f(x) \dx:= \frac{1}{|Z|} \int_{Z} f(x) \dx,
$$
where $Z$ is a bounded subset of
$\mathbb{R}^n$. Second, for arbitrary $s\in (1,\infty)$, we also
defined the dual space
$$
W^{-1,s}(\Omega; \mathbb{R}^N):= \left( W^{1,s'}_0(\Omega; \mathbb{R}^N) \right)^*,
$$
where $s'$ is the dual exponent to $s$, i.e., $s':=s/(s-1)$. Finally, for $X$ being a Banach space, we introduced the Bochner or Sobolev-Bochner spaces, respectively, in a usual way. In addition, to simplify the notation, we will also use the following abbreviation in what follows:
$$
\begin{aligned}
X^{\tilde{p},q}&:=W^{1,\tilde{p}'q}((0,T);W^{-1,\tilde{p}'q}(\Omega;\setR^N))\cap L^{\tilde{p}q}((0,T); W^{1,\tilde{p}q}(\Omega;\setR^N)),\\
X^{\tilde{p},q}_0&:=W^{1,\tilde{p}'q}((0,T);W^{-1,\tilde{p}'q}(\Omega;\setR^N))\cap L^{\tilde{p}q}((0,T); W_0^{1,\tilde{p}q}(\Omega;\setR^N)).
\end{aligned}
$$

Next, let us discuss the choice of the suitable couple according to Definition~\ref{DEF:1}. In case, we set $q=1$, then due to the growth and coercivity assumptions on the nonlinear operator $A$, see \eqref{coercivityJM}--\eqref{growthJM}, then $L^p$-compatible data mean nothing else that, for $f\in L^{p'}(0,T; L^{p'}(\Omega; \mathbb{R}^{n\times N}))$, we are able to construct a weak solution $u\in L^p(0,T; W^{1,p}(\Omega; \mathbb{R}^N))$ which attains in the sense of Definition~\ref{DEF:1} boundary and initial data. Then the information that $\partial_t u$ belongs to $L^{p'}(0,T; W^{-1,p'}(\Omega; \mathbb{R}^N))$ can be directly read from \eqref{eq:sysA}. More precisely, we introduce the following definition of a weak solution to \eqref{eq:sysA}--\eqref{monotoneJM}.
\begin{definition}\label{DEF:2}
Let $p \in (\frac{2n}{n+2},\infty)$, $A$ satisfy \eqref{Car1}--\eqref{growthJM}, $f\in L^{p'}(0,T; L^{p'}(\Omega;\mathbb{R}^{n\times N}))$, and let $(u_0,g_0)$ be an $\pk{(p,1)}$-suitable couple of boundary and initial data with an extension $g$. We say that $u$ is a weak solution to \eqref{eq:sysA}, if there exists $v$ such that $u=v+g$ and
\begin{align}
&v\in X^{p,1}_0 \cap \mathcal{C}([0,T]; L^2(\Omega;\mathbb{R}^N)),\label{D1}\\
&\lim_{t\to 0+} \|v(t,\cdot)\|_{L^2(\Omega)} =0,\label{D2}\\
\intertext{and for almost all $t\in (0,T)$ and all $\varphi \in W^{1,p}_0(\Omega;\mathbb{R}^N)$ there holds}
&\langle \pk{v}(t), \varphi\rangle + \int_{\Omega} A(t,x,\nabla \pk{v}(t,x)) \cdot \nabla \varphi(x) \dx = \int_{\Omega}f(t,x)\cdot \nabla \varphi(x) \dx.\label{D3}
\end{align}
\end{definition}
\pk{The duality $\langle v(t), \varphi\rangle$ is understood in $(W^{1,p}(\Omega,\mathbb R^N))^*$.} We will also frequently use the equivalent reformulation of \eqref{D3}, which takes the form
\begin{equation}
\langle v(t), \varphi\rangle + \int_{\Omega} A(t,x,\nabla v(t,x)+\nabla g(t,x)) \cdot \nabla \varphi(x) \dx =-\langle g(t), \varphi\rangle +\int_{\Omega}f(t,x)\cdot \nabla \varphi(x) \dx.\label{D4}
\end{equation}
Finally, to justify the setting of \pk{$(p,q)$-suitable} data, let us
just discuss the case when we want to have $|\nabla u|\in L^{pq}$
for some $q>1$. In view of Definition~\ref{DEF:2}, it is natural to
assume that also $|\nabla g|$ and also $|\nabla v|$ belong to
$L^{pq}$ and for the right hand side one should require $|f|\in
L^{p'q}$. Then due to \eqref{growthJM}, we also have that
$|A(x,\nabla u)|\in L^{p'q}$ and consequently looking to \eqref{D3},
we see that also $\partial_t u$ should belong to $L^{p'q}(0,T;
W^{-1,p'q}(\Omega;\mathbb{R}^N))$. Since $u$ is given as a sum of
$g$ and $v$, it is quite obvious \pk{where} the compatibility
condition on $g$ comes from. In case that the boundary of $\Omega$
is smooth enough, one can trace the precise assumption on
$(u_0,g_0)$ for example by using the sharp theory for the heat
equation together with the trace theorems for Sobolev-Bochner
functions. Since this topic for a rough boundary goes beyond the aim of the paper and in fact is more
related to the function spaces theory than to theory of PDEs, we do
not investigate it in more details and rather formulate all
assumptions in terms of the existence of a suitable extension $g$,
which is included in Definition~\ref{DEF:1}.

\subsection{Assumptions on the non-linearity}

Next assumptions that need to be imposed are related to the boundary $\partial \Omega$ and also to the structure of nonlinearity $A$. In case we are interested only in existence of a weak solution in the sense of Definition~\ref{DEF:2} for an operator $A$ fulfilling \eqref{Car1}--\eqref{monotoneJM}, one can apply the standard Minty method (see \cite{Minty63}) to observe that for any $f\in L^{p'}(0,T; L^{p'}(\Omega; \mathbb{R}^{n\times N}))$ and any suitable $L^p$-couple $(u_0,g_0)$ there exists a function $u\in X^{p,1}$, which is a weak solution to \eqref{eq:sysA} in the sense of Definition~\ref{DEF:2}. Moreover, for fixed extension $g$ of a couple $(u_0,g_0)$ the weak solution $u$ is unique.

However, in case we want to deal with higher integrability results up to the boundary of $(0,T)\times \Omega$, we need to impose more assumptions on $A$ and also on the domain $\Omega$. Concerning the structural assumptions on $A$, it is well documented with counterexamples in various dimensions~\cite{SvYa02} that in case of systems the above assumptions \eqref{Car1}--\eqref{monotoneJM} are not enough to show higher integrability even locally in $(0,T)\times \Omega$. The results in the higher integrability theory for systems are basically known for the model $p$-Laplace case:
\begin{align}
\label{p-lap}
A(x,\eta)=\abs{\eta}^{p-2}\eta,
\end{align}
or \pk{its} extension to radially symmetric case, see \cite{Uhl77} for results in the elliptic setting and also \cite{DiB93} for the parabolic one,
\begin{align}
\label{Uhlenbeck}
\begin{aligned}
A(x,\eta)&=a(x,\abs{\eta})\eta
\end{aligned}
\end{align}
or the most generally, to operators $A$'s behaving asymptotically (when $\abs{\eta}\to \infty$) as \eqref{Uhlenbeck}, see the results \cite{ByunOhWang15,BulDieSch16,BulSch16,CafPer98} for the elliptic setting or \cite{BreStrVer18,ZhaZhe16, Bog14, KuuMin12} for parabolic one. Inspired by these results we will require in what follows that $A$ satisfies the following asymptotically Uhlenbeck condition.
\begin{definition}
A nonlinearity $A(t,x,\eta)$ is said to be {\bf asymptotically Uhlenbeck} if there exists a bounded nonnegative function $\Phi : [0,\infty) \to [0,\infty)$ such that
\begin{equation}\label{AH1}
\lim_{\rho \to \infty} \Phi(\rho) = 0
\end{equation}
and
\begin{equation}\label{AH2}
\abs{A(t,x,\eta) - \abs{\eta}^{p-2}\eta} \le \Phi(\abs{\eta}) \left( 1+\abs{\eta}^{p-1} \right)
\end{equation}
for almost every $(t,x) \in \mathbb{R} \times \mathbb{R}^n$ and all $\eta \in \mathbb{R}^{n\times N}$.
\end{definition}

We remark that if $A(t,x,\eta)$ is asymptotically Uhlenbeck, then
\begin{equation*}
\lim_{\abs{\eta} \to \infty} \frac{A(t,x,\eta) - \abs{\eta}^{p-2}\eta}{\abs{\eta}^{p-1}} = 0
\end{equation*}
uniformly with respect to $(t,x) \in \mathbb{R} \times \mathbb{R}^n$.
Also notice that \eqref{AH1} and \eqref{AH2} directly imply that there exists a constant $c>0$ such that for almost all $(t,x)\in (0,T)\times \Omega$ and all $\eta_1, \eta_2\in \mathbb{R}^{n\times N}$ we have that
\begin{equation}
\label{LC}
|A(t,x,\eta_1)-A(t,x,\eta_2)|\le c\left(1+ (|\eta_1|+|\eta_2|)^{p-2}|\eta_1-\eta_2|\right).
\end{equation}

Finally, we describe our assumption on the domain $\Omega$. Here, we are inspired by the result for Reifenberg flat domains in scalar case, see \cite{ByunOh18, ByunOhWang15, ByunPala14, MengPhuc12, PalaSoft11}. Hence,  we will consider in the paper that the domain $\Omega$ is $(\delta,r_0)$-Reifenberg flat domain. More precisely:

\begin{definition}
\label{def:reif}
We say that $\Omega$ is $(\delta,r_0)$-Reifenberg flat if for each $x \in \partial \Omega$ and for each $r \in (0,r_0]$, there exists a coordinate system $\{y^1, \cdots ,y^n\}$ such that $x=0$ in this coordinate system and that
\begin{equation*}
B_{r}(0)\cap \{ y^n > \delta r \}\subset B_{r}(0) \cap \Omega \subset B_{r}(0) \cap \{y^n > -\delta r \}.
\end{equation*}
\end{definition}

Having collected all necessary assumptions, we can state our main theorem.
\begin{theorem}
\label{thm:main} Let $p\in (\frac{2n}{n+2},\infty)$, $N\in \setN$
and $A$ satisfy \eqref{Car1}--\eqref{monotoneJM} and
\eqref{AH1}--\eqref{AH2}. Then for every $q_0 \in [1,\infty)$, there
exist a small constant $\delta_0 > 0$ depending on $n$, $N$, $p$,
$q_0$ and $A$ such that for any $q \in [1,q_0]$ and $\delta \in
(0,\delta_0]$, we have the following properties: if $\Omega\subset
\setR^n$ is a bounded $(\delta,r_0)$-Reifenberg flat domain for some
$r_0>0$, i.e., it satisfies Definition~\ref{def:reif}, if $f\in
L^{p'q}((0,T; L^{p'q}(\Omega; \mathbb{R}^{n\times N}))$
and if $(u_0,g_0)$ is a suitable
$\pk{(p,q)}$-couple with an extension $g\in X^{p,q}$, then the solution
$u$ to \eqref{eq:sysA} belongs to $X^{p,q}$ and the following
estimate holds
\begin{equation}
\label{Th:ap} \norm{u}_{X^{p,q}} \leq
c\left(1+\norm{g}_{X^{p,q}}+\norm{f}_{L^{p'q}(0,T; L^{p'q} (\Omega;
\mathbb{R}^{n\times
N}))}^\frac{1}{p-1}+\norm{g}_{X^{p,q}}^{\frac{2+p(q-1)}{2q}}
+\norm{f}_{L^{p'}(0,T; L^{p'}(\Omega; \mathbb{R}^{n\times
N}))}^{\frac{2+p(q-1)}{2(p-1)q}} \right),
\end{equation}
with a positive constant $c$ depending only on $n$, $N$, $p$, $q_0$, $r_0$, $\Omega$, $T$ and $A$.
\end{theorem}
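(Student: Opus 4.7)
The plan is to exploit the structural observation highlighted in the introduction: setting $v:=u-g$, the difference satisfies a Cauchy--Dirichlet problem with homogeneous lateral and initial data,
\[
\partial_t v - \divergence\!\big(\abs{\nabla v}^{p-2}\nabla v\big) \;=\; -\divergence F \,-\, \partial_t g,
\qquad F:= f + \abs{\nabla v}^{p-2}\nabla v - A(t,x,\nabla v+\nabla g),
\]
to which the homogeneous global estimate of Theorem~\ref{thm:plaphom} applies directly. The whole task thus reduces to controlling $\norm{F}_{L^{p'q}}$ by the data plus an absorbable fraction of $\norm{\nabla v}_{L^{pq}}^{p-1}$; combined with the extension norm of $g$ and the triangle inequality $\abs{\nabla u}\le \abs{\nabla v}+\abs{\nabla g}$, this will yield~\eqref{Th:ap}.

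To bound $F$ I would split
\[
\abs{F}\le \abs{f} + \big|\abs{\nabla v+\nabla g}^{p-2}(\nabla v+\nabla g)-\abs{\nabla v}^{p-2}\nabla v\big| + \big|A(t,x,\nabla v+\nabla g)-\abs{\nabla v+\nabla g}^{p-2}(\nabla v+\nabla g)\big|.
\]
The first bracket is a standard pointwise $p$-Laplace difference, which after Young's inequality contributes at most $c\norm{\nabla g}_{L^{pq}}^{p-1}+\tau\norm{\nabla v}_{L^{pq}}^{p-1}$ for any prescribed $\tau>0$. The second bracket is where the \emph{asymptotically Uhlenbeck} structure enters: by~\eqref{AH2} it is pointwise $\le \Phi(\abs{\nabla v+\nabla g})\,(1+\abs{\nabla v+\nabla g}^{p-1})$, and \eqref{AH1} allows, for any prescribed $\varepsilon>0$, a threshold $M=M(\varepsilon)$ with $\Phi(\rho)\le\varepsilon$ for $\rho\ge M$. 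Splitting the domain into $\{\abs{\nabla v+\nabla g}\ge M\}$ and its complement gives the pointwise bound $\varepsilon\,(1+\abs{\nabla v}^{p-1}+\abs{\nabla g}^{p-1})+c(M)$. Feeding these into Theorem~\ref{thm:plaphom} and choosing $\tau$ and $\varepsilon$ sufficiently small (in terms of the constant produced by that theorem) allows the $\norm{\nabla v}_{L^{pq}}^{p-1}$ contributions to be absorbed on the left. The additive ``$+1$'' and the non-homogeneous exponents $\tfrac{2+p(q-1)}{2q}$ and $\tfrac{2+p(q-1)}{2(p-1)q}$ in~\eqref{Th:ap} then come out by combining the absorbed leading-order bound with the baseline $q=1$ energy estimate (obtained by testing~\eqref{D4} with $v$) and interpolating the resulting $L^\infty_tL^2_x\cap L^p_tW^{1,p}_x$ bound against the target $L^{pq}$ norm.

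The delicate point is the ordering of quantifiers in the absorption step. The smallness of $\varepsilon$ needed to absorb the Uhlenbeck remainder is dictated by the Calder\'on--Zygmund constant coming from Theorem~\ref{thm:plaphom}, which itself depends on $q_0$ and on the Reifenberg parameter $\delta_0$. The correct order is therefore: fix $q_0\in[1,\infty)$; read off the constant of Theorem~\ref{thm:plaphom} valid uniformly for $q\in[1,q_0]$, and thereby fix $\varepsilon=\varepsilon(q_0)$; use~\eqref{AH1} to select $M=M(\varepsilon,\Phi)$; finally shrink $\delta_0=\delta_0(q_0,A)$ so that Theorem~\ref{thm:plaphom} actually holds with that constant uniformly on $q\in[1,q_0]$. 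This coupling is exactly what forces the dependence of $\delta_0$ on $q_0$ (and on $A$ through $\Phi$) stated in the theorem, and it is the only quantitative obstacle; all other steps (the reformulation for $v$, the pointwise algebraic bounds, and the interpolation producing the extra exponents) are direct consequences of the ingredients already at our disposal.
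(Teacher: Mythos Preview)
Your overall scheme---reduce to $v:=u-g$, rewrite as a $p$-Laplace system with homogeneous data, apply Theorem~\ref{thm:plaphom}, and absorb---is essentially a compressed version of the paper's two-step route (the paper first proves an intermediate result, Theorem~\ref{thm:main2}, for asymptotically Uhlenbeck $A$ with homogeneous data, and only then treats the boundary shift). Your single-step variant is legitimate in principle, but two points are missing.

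\textbf{The absorption is only formal.} To move $\tau\norm{\nabla v}_{L^{pq}}^{p-1}$ (and the analogous $\varepsilon$-term) to the left you must already know that $\norm{\nabla v}_{L^{pq}}<\infty$; otherwise the inequality is vacuous. For $p>2$ the paper addresses precisely this issue by an explicit bootstrap: starting from $q_0'=1$ it defines $q_k:=\dfrac{(p-1)q_{k-1}}{q_{k-1}/q+p-2}$ and at each step balances Young's inequality so that only $\norm{\nabla v}_{L^{pq_{k-1}}}$ (already finite) appears on the right; since $q_k\nearrow q$ one obtains the qualitative statement and then passes to the limit in the uniform estimate. Your proposal contains no mechanism of this kind, and the phrase ``interpolating the resulting $L^\infty_tL^2_x\cap L^p_tW^{1,p}_x$ bound against the target $L^{pq}$ norm'' does not supply one: the non-homogeneous exponents in~\eqref{Th:ap} come not from an interpolation but from the lower-order term $\big(\int\!\!\int|\nabla v|^p\big)^{1+p(q-1)/2}$ in~\eqref{eq:hom1}, into which one substitutes the $q=1$ energy bound.

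\textbf{The term $\partial_t g$ is not in divergence form.} Theorem~\ref{thm:plaphom} takes a right-hand side of the shape $-\divergence(\cdot)$, so before applying it you must represent $\partial_t g=-\divergence G$ with $G\in L^{p'q}$ and $\norm{G}_{L^{p'q}}\le c\,\norm{\partial_t g}_{L^{p'q}(0,T;W^{-1,p'q})}$. The paper does this by solving, for a.e.\ $t$, the auxiliary Dirichlet problem $-\divergence(|\nabla w|^{(p'q)'-2}\nabla w)=\partial_t g(t)$ in $\Omega$ and setting $G:=|\nabla w|^{(p'q)'-2}\nabla w$; this step should be made explicit in your argument.
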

\begin{remark}[$p \leq 2$]
\label{rem:main}
Clearly, since in case $p\leq 2$, we have $1+\frac{p}{2}(q-1)\leq q$ and so
\eqref{Th:ap} can be written in form {\em without} a scaling deficit:
\begin{equation}
\label{Th:ple2}
\norm{u}_{X^{p,q}} \leq c\left(1+\norm{g}_{X^{p,q}}+\norm{f}_{L^{p'q}(0,T; L^{p'q}(\Omega; \mathbb{R}^{n\times N}))}^\frac{1}{p-1} \right).
\end{equation}
\end{remark}
We would like to emphasize that the nonlinearity $A$ can go beyond Uhlenbeck category.
The nonlinearity $A$ under consideration behaves like the $p$-Laplacian operator only near the infinity with respect to the gradient variable.
In addition, it is worth pointing out that the domain under consideration is the so-called Reifenberg flat domain whose boundary is so rough that even the unit normal vector cannot be well defined.

The rest of the paper is devoted to the proof of Theorem~\ref{thm:main}.
First, in Section~\ref{Sec_Elliptic} we state and prove an elliptic version of Theorem~\ref{thm:main}.
In Section~\ref{S2} we focus on the homogeneous case, i.e., the case  when $(u_0,g_0)=(0,0)$ but $f\neq 0$.
Then in Section~\ref{S3} we apply the result for homogeneous case to the inhomogeneous one and prove the main theorem.

%

\section{Elliptic systems}\label{Sec_Elliptic}

Before proving the main theorem, we first prove elliptic estimates for the sake of better readability and to introduce the strategy of the proof.
Let us consider the following elliptic system:
\begin{align}
\label{eq:elliptic_sys}
\left\lbrace \ \begin{aligned}
\divergence A(x,\nabla u) &= \divergence f &&\textrm{ in } \Omega,\\
u&=g&&\textrm{ on } \partial \Omega,
\end{aligned} \right.
\end{align}
where $g\in W^{1,p}(\Omega)$ is a suitable extension of the given boundary values.
Here the nonlinearity $A : \mathbb{R}^n \times \mathbb{R}^{n\times N}\to \mathbb{R}^{n\times N}$ is a Carath\'{e}odory mapping, satisfying the $p$-coercivity, the $(p-1)$-growth and the monotonicity conditions as follows: there exist positive constants $c_1$ and $c_2$ such that for almost all $x \in \mathbb{R}^n$ and all $\eta_1,\eta_2 \in \mathbb{R}^{n\times N}$,
\begin{align}
\label{coercivityJM_elliptic}
c_1|\eta_1|^p-c_2 &\le A(x,\eta_1)\cdot \eta_1 &&\textrm{$p$-coercivity},\\
|A(x,\eta_1)|&\le c_2(1+|\eta_1|)^{p-1}&&\textrm{$(p-1)$-growth},\label{growthJM_elliptic}\\
0&\le (A(x,\eta_1)-A(x,\eta_2))\cdot (\eta_1-\eta_2)&&\textrm{monotonicity}.\label{monotoneJM_elliptic}
\end{align}
We also assume that $A$ is asymptotically Uhlenbeck, that is, there exists a bounded nonnegative function $\Phi : [0,\infty) \to [0,\infty)$ such that
\begin{equation}\label{AH1_elliptic}
\lim_{\rho \to \infty} \Phi(\rho) = 0
\end{equation}
and
\begin{equation}\label{AH2_elliptic}
\abs{A(x,\eta) - \abs{\eta}^{p-2}\eta} \le \Phi(\abs{\eta}) \left( 1+\abs{\eta}^{p-1} \right)
\end{equation}
for almost every $x \in \mathbb{R}^n$ and all $\eta \in \mathbb{R}^{n\times N}$.

The following theorem is the main result in this section.
\begin{theorem}
\label{thm:elliptic_main}
Let $p\in (1,\infty)$, $N\in \setN$ \pk{and $A$ satisfy \eqref{Car1}--\eqref{Car2}, \eqref{coercivityJM_elliptic}--\eqref{monotoneJM_elliptic} and \eqref{AH1_elliptic}--\eqref{AH2_elliptic}.}
Then for every $q_0 \in [1,\infty)$, there exist a small constant $\delta_0 > 0$ depending on $n$, $N$, $p$, $q_0$ and $A$ such that for any $q \in [1,q_0]$ and $\delta \in (0,\delta_0]$, we have the following properties: if $\Omega\subset \setR^n$ is a bounded $(\delta,r_0)$-Reifenberg flat domain for some $r_0>0$, i.e., it satisfies Definition~\ref{def:reif}, if $f\in L^{p'q}(\Omega; \mathbb{R}^{n\times N})$ and if $g \in W^{1,pq}(\Omega; \mathbb{R}^N)$, then the solution $u$ to \eqref{eq:elliptic_sys} belongs to $W^{1,pq}(\Omega; \mathbb{R}^N)$ and the following estimate holds
\begin{align*}
\norm{\nabla u}_{L^{pq}(\Omega; \mathbb{R}^{n\times N})} &\leq c\left(1+\norm{\nabla g}_{L^{pq}(\Omega; \mathbb{R}^{n\times N})}+\norm{f}_{L^{p'q}(\Omega; \mathbb{R}^{n\times N})}^\frac{1}{p-1}\right),
\end{align*}
with a positive constant $c$ depending only on $n$, $N$, $p$, $q_0$ and on the assumptions on $A$ and $\Omega$.
\end{theorem}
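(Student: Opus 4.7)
The plan is to reduce the inhomogeneous Dirichlet problem to one with zero boundary data, in the spirit outlined in the introduction. Setting $v := u - g \in W^{1,p}_0(\Omega;\setR^N)$, the function $v$ is a weak solution of
\[
-\divergence\bigl(\abs{\nabla v}^{p-2}\nabla v\bigr) = -\divergence F \quad\text{in }\Omega, \qquad v = 0 \text{ on }\partial\Omega,
\]
with
\[
F := \abs{\nabla v}^{p-2}\nabla v - A(x,\nabla v + \nabla g) + f.
\]
In this way, the inhomogeneous boundary values and the asymptotic defect of $A$ from the $p$-Laplacian are both absorbed into a right hand side that only needs to be estimated in $L^{p'q}(\Omega;\setR^{n\times N})$.

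The first ingredient I would invoke is the elliptic Calder\'on--Zygmund estimate for the pure $p$-Laplacian with zero Dirichlet data on $(\delta,r_0)$-Reifenberg flat domains: for each $q_0\in[1,\infty)$ there exists $\delta_0>0$ (depending on $n$, $N$, $p$, $q_0$) such that whenever $\delta\le\delta_0$, $F\in L^{p'q}(\Omega)$ with $q\in[1,q_0]$, and $v$ solves the displayed $p$-Laplace system above, one has
\[
\norm{\nabla v}_{L^{pq}(\Omega)}^{p-1} \le c\,\norm{F}_{L^{p'q}(\Omega)}.
\]
This is the stationary counterpart of Theorem~\ref{thm:plaphom}; it can either be quoted from the Reifenberg-flat literature or derived as the elliptic specialisation of Section~\ref{S2}, combining interior and boundary comparison estimates (the latter obtained by reflection across the hyperplane provided by the Reifenberg condition) with a Vitali/level-set covering scheme.

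With this at hand, I would control $F$ in $L^{p'q}$ by splitting
\[
F = f + \bigl(\abs{\nabla v}^{p-2}\nabla v - \abs{\nabla u}^{p-2}\nabla u\bigr) + \bigl(\abs{\nabla u}^{p-2}\nabla u - A(x,\nabla u)\bigr).
\]
The first bracket is handled pointwise by the standard algebraic estimate for $\eta\mapsto\abs{\eta}^{p-2}\eta$ (a $(p-1)$-H\"older bound when $p<2$, and a Lipschitz bound with weight $(\abs{\nabla u}+\abs{\nabla g})^{p-2}$ when $p\ge 2$); together with H\"older and Young's inequalities this yields, for any $\varepsilon>0$,
\[
\bigl\|\abs{\nabla v}^{p-2}\nabla v - \abs{\nabla u}^{p-2}\nabla u\bigr\|_{L^{p'q}} \le \varepsilon\,\norm{\nabla u}_{L^{pq}}^{p-1} + C_\varepsilon \,\norm{\nabla g}_{L^{pq}}^{p-1}.
\]
The second bracket is controlled by the asymptotically Uhlenbeck condition \eqref{AH1_elliptic}--\eqref{AH2_elliptic}: given $\varepsilon>0$ one picks $M = M(\varepsilon)$ so that $\Phi(\rho)<\varepsilon$ for $\rho>M$; then on $\{\abs{\nabla u}>M\}$ the bracket is bounded by $\varepsilon(1+\abs{\nabla u}^{p-1})$, while on $\{\abs{\nabla u}\le M\}$ it is bounded by the constant $\sup\Phi\cdot(1+M^{p-1})$, whose $L^{p'q}(\Omega)$ norm is finite. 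Combining these bounds with $\norm{\nabla u}_{L^{pq}}\le\norm{\nabla v}_{L^{pq}}+\norm{\nabla g}_{L^{pq}}$, one arrives at
\[
\norm{\nabla v}_{L^{pq}}^{p-1} \le C\varepsilon\,\norm{\nabla v}_{L^{pq}}^{p-1} + C_\varepsilon\bigl(1 + \norm{\nabla g}_{L^{pq}}^{p-1} + \norm{f}_{L^{p'q}}\bigr),
\]
and choosing $\varepsilon$ small enough to absorb the first term on the right, followed by the triangle inequality, yields the advertised estimate.

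The principal obstacle is the Calder\'on--Zygmund estimate of step two with the sharp dependence $\delta_0=\delta_0(q_0)$: this forces a careful boundary comparison with a $p$-harmonic function on a reflected half-ball, whose error must be propagated quantitatively through the level-set decomposition (and is precisely what dictates $\delta_0\to 0$ as $q_0\to\infty$). A secondary technical point is justifying the absorption rigorously: a priori only $\nabla v\in L^p$ is known, so the above argument must be performed on a suitable truncation or regularisation of the equation, with the uniform $L^{pq}$ bound subsequently passed to the limit.
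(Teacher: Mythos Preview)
Your outline is correct and close to the paper's, but the organisation differs in one respect and the point you call ``secondary'' is where the paper does the real work. The paper proceeds in two stages: it first establishes the zero-boundary case for the asymptotically Uhlenbeck operator itself (Lemma~\ref{lem:elliptic_sysB}), by writing it as a $p$-Laplace system with right-hand side $f-A(x,\nabla v)+|\nabla v|^{p-2}\nabla v$, absorbing the asymptotic defect exactly as you do, and removing the a~priori assumption by approximating $\Omega$ from inside by smooth domains together with the qualitative result of Lemma~\ref{lem:elliptic_regularity}. Only then does it treat inhomogeneous $g$: setting $v=u-g$ and applying Lemma~\ref{lem:elliptic_sysB} to the equation $\divergence A(x,\nabla v)=\divergence\bigl(A(x,\nabla v)-A(x,\nabla u)+f\bigr)$, it estimates $|A(x,\nabla v)-A(x,\nabla u)|\le c(|\nabla v|+|\nabla g|)^{p-2}|\nabla g|+c$ via~\eqref{LC}. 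Your single-step reduction, folding the asymptotic defect and the $g$-correction into one right-hand side for the pure $p$-Laplacian, is a perfectly valid alternative and saves one intermediate lemma.

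Where your proposal is thin is the absorption. The paper does \emph{not} argue by regularisation here: for $p\le 2$ the bound $c|\nabla g|^{p-1}+c$ is immediate and no absorption is needed, while for $p>2$ it runs an explicit bootstrap $q_0=1<q_1<q_2<\cdots\nearrow q$ with $q_k=(p-1)q_{k-1}/\bigl(\tfrac{q_{k-1}}{q}+p-2\bigr)$, choosing the Young exponents at step $k$ so that the right-hand side involves only $|\nabla v|^{pq_{k-1}}+|\nabla g|^{pq}$; finiteness then propagates and the uniform estimate follows by letting $k\to\infty$. Your suggested alternative---regularise data and domain, then pass to the limit---is not self-contained: on the smooth approximating problem with \emph{inhomogeneous} boundary values the same circularity ($F$ depends on $\nabla v$) persists, so you would either need the very iteration the paper uses, or a global Lipschitz bound for systems on smooth domains with smooth data, which is a substantially heavier input than anything the paper invokes.
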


To prove the above theorem, we shall consider the system with homogeneous boundary data
\begin{align}
\label{eq:elliptic_sysB}
\left\lbrace \ \begin{aligned}
\divergence A(x,\nabla v) &= \divergence f &&\textrm{ in } \Omega,\\
v&=0&&\textrm{ on } \partial \Omega,
\end{aligned} \right.
\end{align}
and the $p$-Laplacian system with homogeneous boundary data
\begin{align}
\label{eq:elliptic_sysB2}
\left\lbrace \ \begin{aligned}
\divergence(\abs{\nabla w}^{p-2} \nabla w) &= \divergence f &&\textrm{ in } \Omega,\\
w&=0&&\textrm{ on } \partial \Omega.
\end{aligned} \right.
\end{align}
For the $p$-Laplacian system \eqref{eq:elliptic_sysB2}, we have the following Calder\'{o}n-Zygmund estimate, see \cite{ByunWangZhou07} and \cite[Remark 4.5]{ByunOk16}.

\begin{lemma}
\label{lem:elliptic_sysB2}
Let $w \in W_0^{1,p}(\Omega; \mathbb{R}^N)$ be the solution of \eqref{eq:elliptic_sysB2}.
Then for every $q_0 \in [1,\infty)$, there exist a small constant $\delta_0>0$ depending on $n$, $N$, $p$ and $q_0$ such that for any $q \in [1,q_0]$ and $\delta \in (0,\delta_0]$, we have the following properties:  if $\Omega\subset \setR^n$ is a bounded $(\delta,r_0)$-Reifenberg flat domain for some $r_0>0$ and if $f\in L^{p'q}(\Omega; \mathbb{R}^{n\times N})$, then $w \in W_0^{1,pq}(\Omega; \mathbb{R}^N)$ with the estimate
\begin{align}
\label{elliptic_estiamte}
\norm{\nabla w}_{L^{pq}(\Omega; \mathbb{R}^{n\times N})} &\leq c \norm{f}_{L^{p'q}(\Omega; \mathbb{R}^{n\times N})}^\frac{1}{p-1}
\end{align}
for some positive constant $c$ depending only on $n$, $N$, $p$, $q_0$, $r_0$ and the assumptions on $\Omega$.
\end{lemma}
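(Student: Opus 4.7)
The plan is to prove the gradient $L^{pq}$ bound via the Caffarelli--Peral / Acerbi--Mingione level-set scheme, adapted to Reifenberg-flat boundaries by locally comparing $w$ with a $p$-harmonic reference map whose gradient is bounded. The three ingredients are: (i) an interior comparison yielding a small gradient defect in terms of $f$; (ii) a boundary comparison where Reifenberg flatness lets us replace $\partial\Omega$ by a hyperplane with an $O(\delta)$ error; and (iii) a maximal-function covering argument that converts these comparisons into a distributional bound on $|\nabla w|^p$.

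For an interior ball $B_{2r}(x_0)\subset\Omega$ I would compare $w$ with the $p$-harmonic extension $h$ of $w|_{\partial B_r(x_0)}$ into $B_r(x_0)$. Testing the difference of the two equations with $w-h$ and using monotonicity of $\eta\mapsto|\eta|^{p-2}\eta$ (together with the shifted $N$-function $\varphi_{|\nabla w|}$ in the singular range $p<2$) yields
$$
\dashint_{B_r(x_0)}|\nabla w-\nabla h|^{p}\;\le\; c\,\dashint_{B_{2r}(x_0)}|f|^{p'},
$$
while Uhlenbeck's interior regularity for $p$-harmonic systems gives the sup-bound $\|\nabla h\|_{L^\infty(B_{r/2})}^{p}\le c\,\dashint_{B_r}|\nabla h|^p$. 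At a boundary point $x_0\in\partial\Omega$, I would use the $\delta$-Reifenberg condition to trap $\partial\Omega\cap B_r(x_0)$ between two parallel hyperplanes, and compare $w$ successively with the $p$-harmonic map $w_1$ on $B_r(x_0)\cap\Omega$ with zero Dirichlet data on $\partial\Omega\cap B_r(x_0)$, and then with the $p$-harmonic map $w_2$ on the half-ball aligned with the outer Reifenberg hyperplane, vanishing on its flat face. Odd reflection across that flat face turns $w_2$ into an interior $p$-harmonic map on a full ball, so Uhlenbeck again provides $L^\infty$ control of $|\nabla w_2|$. The mismatch between the two reference domains contributes an error of size $O(\delta)\dashint_{B_{2r}\cap\Omega}|\nabla w|^p$, producing
$$
\dashint_{B_r\cap\Omega}|\nabla w-\nabla w_2|^{p}\;\le\;\epsilon\,\dashint_{B_{2r}\cap\Omega}|\nabla w|^{p}+c(\epsilon)\dashint_{B_{2r}\cap\Omega}|f|^{p'},
$$
for any prescribed $\epsilon>0$, provided $\delta\le\delta_0(\epsilon)$.

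With these comparisons in hand, the standard Vitali / stopping-time argument on dyadic level sets of a (truncated) Hardy--Littlewood maximal function produces a good-$\lambda$ inequality of the form
$$
\bigl|\{M(|\nabla w|^{p})>A\lambda\}\bigr|\;\le\;\epsilon\bigl|\{M(|\nabla w|^{p})>\lambda\}\bigr|+\bigl|\{M(|f|^{p'})>c\epsilon\lambda\}\bigr|,
$$
for a sufficiently large universal jump constant $A$. Multiplying by $\lambda^{q-1}$ and summing dyadically -- which requires $\epsilon$, hence $\delta_0$, to be chosen small depending on $q_0$ so that $A^{q}\epsilon<\tfrac12$ -- converts this into $\|\nabla w\|_{L^{pq}(\Omega)}\le c\,\|f\|_{L^{p'q}(\Omega)}^{1/(p-1)}$, which is precisely \eqref{elliptic_estiamte}.

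The main obstacle I expect is making the boundary comparison quantitative and uniform in $p$. The Uhlenbeck-type regularity for the reflected reference map is only available after the boundary has been flattened, and controlling the energy penalty incurred in that flattening across the $p=2$ threshold is delicate: one has to work with the shifted $N$-function $\varphi_{|\nabla w|}(|\nabla w-\nabla w_2|)$ rather than raw $L^p$ differences, verify a Sobolev--Poincar\'e inequality on the intersection $B_r\cap\Omega$ with constants independent of the particular Reifenberg half-space, and ensure that all hidden constants stay uniform as $q$ ranges over $[1,q_0]$, so that the final $\delta_0$ depends only on $q_0$ and not on the specific exponent $q$.
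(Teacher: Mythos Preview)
The paper does not prove this lemma at all: it is quoted as a known result with a bare citation to \cite{ByunWangZhou07} and \cite[Remark~4.5]{ByunOk16}, and is then used as a black box in the proof of Lemma~\ref{lem:elliptic_sysB}. So there is nothing to compare your argument against in the paper itself.

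That said, your outline is essentially the strategy of the cited references: local comparison with a $p$-harmonic reference map (interior via Uhlenbeck, boundary via Reifenberg flattening plus odd reflection), followed by a good-$\lambda$/Vitali covering argument on level sets of the maximal function of $|\nabla w|^p$. The structure is correct, and you have identified the genuinely delicate point, namely that the boundary comparison error must be made $O(\epsilon)$ uniformly in the Reifenberg chart, with $\epsilon$ tied to $\delta$ so that the final smallness condition depends only on $q_0$. Your sketch is honest about this being a sketch: the two-step boundary comparison (first to $w_1$ on $\Omega\cap B_r$, then to $w_2$ on the half-ball) and the control of the domain-mismatch term by $O(\delta)\dashint|\nabla w|^p$ require real work---in particular a measure-density/capacity argument on the thin strip between the Reifenberg hyperplanes and a uniform Sobolev--Poincar\'e inequality on $\Omega\cap B_r$---which you acknowledge but do not carry out. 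For the purposes of this paper that is fine, since the authors themselves defer entirely to the literature; if you intend to include a self-contained proof you will need to fill in exactly those steps.
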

%

For the asymptotically Uhlenbeck system \eqref{eq:elliptic_sysB} with a sufficiently smooth domain $\Omega$, we have the following qualitative estimate.
This result is an elliptic version of the paper \cite{Bog14}, which we repeat here to introduce our new strategy for a proof.

\begin{lemma}
\label{lem:elliptic_regularity}
\pk{Let $p\in (1,\infty)$, $N\in \setN$ and $A$ satisfy \eqref{Car1}--\eqref{Car2}, \eqref{coercivityJM_elliptic}--\eqref{monotoneJM_elliptic} and \eqref{AH1_elliptic}--\eqref{AH2_elliptic}.}
Suppose that $\Omega$ is a $C^1$-domain, and let $v$ be a solution to \eqref{eq:elliptic_sysB}.
If $f\in L^{p'q}(\Omega; \mathbb{R}^{n\times N})$, then we have $v \in W_0^{1,pq}(\Omega; \mathbb{R}^n)$.
\end{lemma}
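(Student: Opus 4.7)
The plan is to view $v$ as a solution of a pure $p$-Laplace equation and treat the difference $A(x,\nabla v) - |\nabla v|^{p-2}\nabla v$ as part of the right-hand side, then invoke Lemma~\ref{lem:elliptic_sysB2}. Concretely, \eqref{eq:elliptic_sysB} rewrites as
\begin{align*}
\divergence(|\nabla v|^{p-2}\nabla v) = \divergence F, \qquad F := f + |\nabla v|^{p-2}\nabla v - A(x,\nabla v).
\end{align*}
The asymptotic Uhlenbeck structure \eqref{AH1_elliptic}--\eqref{AH2_elliptic} now pays off: setting $M := \sup_{[0,\infty)}\Phi$ and $\epsilon(K) := \sup_{\rho > K}\Phi(\rho)$ (which tends to $0$ as $K \to \infty$), one has for every $K \ge 1$
\begin{align*}
|F - f| \le \Phi(|\nabla v|)\bigl(1 + |\nabla v|^{p-1}\bigr) \le \epsilon(K)\bigl(1 + |\nabla v|^{p-1}\bigr) + M\bigl(1 + K^{p-1}\bigr).
\end{align*}
The critical feature is that the dangerous growth $|\nabla v|^{p-1}$ appears only with the small prefactor $\epsilon(K)$.

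Since every $C^1$ domain is $(\delta,r_\delta)$-Reifenberg flat for each $\delta>0$, Lemma~\ref{lem:elliptic_sysB2} is available at the target exponent $q$ provided $\delta$ is chosen small enough. \emph{Assuming temporarily} that $\nabla v \in L^{pq}(\Omega)$, applying that lemma and raising to the $p'q$-th power gives, using the pointwise bound above,
\begin{align*}
\|\nabla v\|_{L^{pq}(\Omega)}^{pq} \le c\|f\|_{L^{p'q}(\Omega)}^{p'q} + c\bigl(M(1+K^{p-1})\bigr)^{p'q}|\Omega| + c\,\epsilon(K)^{p'q}\bigl(|\Omega| + \|\nabla v\|_{L^{pq}(\Omega)}^{pq}\bigr).
\end{align*}
Choosing $K$ large enough that $c\,\epsilon(K)^{p'q} \le \tfrac12$ absorbs the last term, yielding the closed a priori estimate
\begin{align*}
\|\nabla v\|_{L^{pq}(\Omega)} \le c\bigl(1 + \|f\|_{L^{p'q}(\Omega)}^{1/(p-1)}\bigr),
\end{align*}
with $c$ depending only on $p$, $q$, $|\Omega|$ and the profile $\Phi$.

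To remove the standing hypothesis $\nabla v \in L^{pq}(\Omega)$, I would approximate the data. Set $f_k := f\,\mathbf{1}_{\{|f|\le k\}} \in L^\infty(\Omega)$ and let $v_k \in W^{1,p}_0(\Omega;\mathbb R^N)$ solve \eqref{eq:elliptic_sysB} with $f$ replaced by $f_k$. On the $C^1$ domain $\Omega$ with bounded datum $f_k$, the global regularity theory for asymptotically Uhlenbeck systems (in particular the global Lipschitz bound of \cite{Bog15}) gives $v_k \in W^{1,\infty}_0(\Omega) \subset W^{1,pq}_0(\Omega)$, so the a priori estimate applies to each $v_k$. The constants in that estimate depend only on $\|f\|_{L^{p'q}}$ and the fixed threshold $K$, hence the bound is uniform in $k$. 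Passing $k \to \infty$ using the monotonicity \eqref{monotoneJM_elliptic} and Minty's trick identifies the weak $W^{1,pq}_0$-limit with the unique solution $v$ and delivers $v \in W^{1,pq}_0(\Omega;\mathbb R^N)$.

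The main obstacle I anticipate is precisely the approximation step: the perturbation used to produce the initial extra regularity must not destroy the asymptotic profile $\Phi$, since the whole argument hinges on the smallness of $\epsilon(K)$. This rules out the naive coercive regularization $A(x,\eta) + \frac{1}{k}|\eta|^{r-2}\eta$ (which degrades asymptotic Uhlenbeck with $r > p$) and forces one to smooth the source $f$ and exploit sharp regularity on the smooth domain---an ingredient available for $C^1$ domains but precisely what the remainder of the paper will bypass when passing to Reifenberg flat domains.
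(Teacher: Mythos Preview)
The paper does not actually supply a proof of Lemma~\ref{lem:elliptic_regularity}; it simply records the result as the elliptic analogue of \cite{Bog14} and then uses it as a black box inside the approximation step of Lemma~\ref{lem:elliptic_sysB}. Your a~priori argument---rewriting \eqref{eq:elliptic_sysB} as a $p$-Laplace system with right-hand side $F=f+|\nabla v|^{p-2}\nabla v-A(x,\nabla v)$, invoking Lemma~\ref{lem:elliptic_sysB2} (legitimate since $C^1$ domains are $(\delta,r_\delta)$-Reifenberg flat for all $\delta>0$), splitting $\Phi$ into a small tail and a bounded bulk, and absorbing---is correct and is in fact precisely the computation the paper carries out in the proof of Lemma~\ref{lem:elliptic_sysB}. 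So on the analytic side you have independently reproduced the paper's mechanism.

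The soft spot is your approximation step. You appeal to \cite{Bog15} for a global Lipschitz bound on the solutions $v_k$ of the \emph{asymptotically Uhlenbeck} system with bounded source $f_k$. But \cite{Bog15} treats the (parabolic) $p$-Laplace system itself, on smooth domains; it does not furnish $W^{1,\infty}$ regularity for solutions of a general $A$ satisfying only \eqref{coercivityJM_elliptic}--\eqref{monotoneJM_elliptic} and \eqref{AH1_elliptic}--\eqref{AH2_elliptic}, where $A$ may be quite irregular for bounded $|\eta|$. Without that qualitative input you cannot justify $F_k\in L^{p'q}$, and the absorption argument stalls. The paper sidesteps this entirely by taking Lemma~\ref{lem:elliptic_regularity} from \cite{Bog14} as given; if you want a self-contained route, you would need either a genuine qualitative regularity result for asymptotically Uhlenbeck systems on smooth domains (this is what \cite{Bog14} provides in the parabolic setting), or a bootstrapping scheme that lifts integrability from $L^p$ to $L^{pq}$ in finitely many steps without ever assuming the target integrability---similar in spirit to the $q_k$-iteration the paper runs in Section~\ref{S3}.
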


\pk{We can generalize this Calder\'{o}n-Zygmund estimate for the weak solution to the system \eqref{eq:elliptic_sysB} also for Reifenberg flat domains as follows.}

\begin{lemma}
\label{lem:elliptic_sysB}
\pk{Let $p\in (1,\infty)$, $N\in \setN$ and $A$ satisfy \eqref{Car1}--\eqref{Car2}, \eqref{coercivityJM_elliptic}--\eqref{monotoneJM_elliptic} and \eqref{AH1_elliptic}--\eqref{AH2_elliptic}.}
For every $q_0\in [1,\infty)$ there exist a small constant $\delta_0>0$ depending on $n$, $N$, $p$, $q_0$ and $A$, such that for any $q \in [1,q_0]$ and any $\delta \in (0,\delta_0]$ we have the following properties:  if $\Omega\subset \setR^n$ is a~bounded $(\delta,r_0)$-Reifenberg flat domain for some $r_0>0$ and if $f \in L^{p'q}(\Omega; \mathbb{R}^{n\times N})$, then the solution $v$ to \eqref{eq:elliptic_sysB} belongs to $W_0^{1,pq}(\Omega; \mathbb{R}^N)$ and the following estimate holds
\begin{align}
\label{elliptic_estiamte2}
\norm{\nabla v}_{L^{pq}(\Omega; \mathbb{R}^{n\times N})} &\leq c \left( 1 + \norm{f}_{L^{p'q}(\Omega; \mathbb{R}^{n\times N})}^\frac{1}{p-1} \right)
\end{align}
with a positive constant $c$ depending only on $n$, $N$, $p$, $q_0$, $r_0$, $\Omega$ and $A$.
%
\end{lemma}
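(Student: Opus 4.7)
The plan is to treat \eqref{eq:elliptic_sysB} as a perturbation of the pure $p$-Laplacian system and invoke the Calder\'on--Zygmund estimate of Lemma~\ref{lem:elliptic_sysB2}, with the asymptotic Uhlenbeck condition \eqref{AH1_elliptic}--\eqref{AH2_elliptic} providing the smallness needed to absorb the perturbation. Concretely, I would rewrite the equation as
\begin{align*}
\divergence\bigl(\abs{\nabla v}^{p-2}\nabla v\bigr) = \divergence \tilde f, \qquad \tilde f := f - \bigl(A(x,\nabla v) - \abs{\nabla v}^{p-2}\nabla v\bigr),
\end{align*}
and, assuming for the moment that $\nabla v\in L^{pq}(\Omega;\setR^{n\times N})$, apply Lemma~\ref{lem:elliptic_sysB2} to obtain, for $\delta\leq \delta_0(n,N,p,q_0)$ sufficiently small,
\begin{align*}
\norm{\nabla v}_{L^{pq}(\Omega;\setR^{n\times N})} \leq c\, \norm{\tilde f}_{L^{p'q}(\Omega;\setR^{n\times N})}^{1/(p-1)}.
\end{align*}
What remains is to estimate $\tilde f$ and absorb the resulting self-referential dependence on $\nabla v$.

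For the absorption, fix a threshold $M>0$ and set $\epsilon(M) := \sup_{s\geq M}\Phi(s)$, which tends to $0$ as $M\to\infty$ by~\eqref{AH1_elliptic}. Splitting $\Omega$ into the sublevel and superlevel sets of $\abs{\nabla v}$ at height $M$ and using~\eqref{AH2_elliptic}, the sublevel set contributes at most a constant $C(M,\norm{\Phi}_\infty,\abs{\Omega})$ in $L^{p'q}$, while on the superlevel set one picks up the small factor $\epsilon(M)$, yielding
\begin{align*}
\norm{\tilde f}_{L^{p'q}(\Omega)} \leq \norm{f}_{L^{p'q}(\Omega)} + C(M) + \epsilon(M)\, \norm{\nabla v}_{L^{pq}(\Omega)}^{p-1}.
\end{align*}
Substituting into the previous bound and raising both sides to the power $p-1$, which is monotone on $[0,\infty)$ for any $p>1$, produces a self-improving inequality
\begin{align*}
\norm{\nabla v}_{L^{pq}(\Omega)}^{p-1} \leq c^{p-1}\bigl(\norm{f}_{L^{p'q}(\Omega)} + C(M)\bigr) + c^{p-1}\epsilon(M)\, \norm{\nabla v}_{L^{pq}(\Omega)}^{p-1}.
\end{align*}
Choosing $M$ depending only on $c$, $p$, $\Phi$ so that $c^{p-1}\epsilon(M)\leq 1/2$ lets me absorb the last term on the right and conclude \eqref{elliptic_estiamte2}.

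The main obstacle is that the argument above is only \emph{conditional}: it requires the very integrability $\nabla v\in L^{pq}(\Omega)$ that we wish to establish. To lift this restriction, my plan is to approximate the nonlinearity by truncated versions $A_\kappa$ satisfying $A_\kappa(x,\eta)=A(x,\eta)$ for $\abs{\eta}\leq \kappa$ and $A_\kappa(x,\eta)=\abs{\eta}^{p-2}\eta$ for $\abs{\eta}\geq 2\kappa$, interpolating in a standard way so that the structural conditions \eqref{Car1}--\eqref{monotoneJM_elliptic} remain valid and \eqref{AH2_elliptic} is preserved with the same $\Phi$. For each $\kappa$ the corresponding solution $v_\kappa\in W^{1,p}_0(\Omega;\setR^N)$ exists by Minty's method, and now the crucial point is that the perturbation $A_\kappa(x,\nabla v_\kappa)-\abs{\nabla v_\kappa}^{p-2}\nabla v_\kappa$ is \emph{pointwise bounded} by a constant depending on $\kappa$, so $\tilde f_\kappa \in L^{p'q}(\Omega)$ and Lemma~\ref{lem:elliptic_sysB2} unconditionally gives $\nabla v_\kappa\in L^{pq}(\Omega)$. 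The absorption argument from the second paragraph then applies to $v_\kappa$ and, because the dependence on $\kappa$ enters only through $C(M)$ and $\Phi$ (both independent of $\kappa$), produces bounds on $\norm{\nabla v_\kappa}_{L^{pq}(\Omega)}$ uniform in $\kappa$. Letting $\kappa\to\infty$, monotonicity~\eqref{monotoneJM_elliptic} together with weak convergence of $\nabla v_\kappa$ in $L^p$ and weak lower semicontinuity of the $L^{pq}$-norm identifies the limit with the unique weak solution $v$ of \eqref{eq:elliptic_sysB} and transfers the quantitative bound. As an alternative route, one could instead exhaust $\Omega$ from within by $C^1$-subdomains and use Lemma~\ref{lem:elliptic_regularity} to obtain the qualitative integrability of the approximating solutions.
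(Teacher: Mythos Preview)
Your conditional a priori estimate—rewriting the equation as the $p$-Laplacian with right-hand side $\tilde f$, applying Lemma~\ref{lem:elliptic_sysB2}, splitting on $\{\abs{\nabla v}<M\}$ versus $\{\abs{\nabla v}\geq M\}$, and absorbing via the asymptotic Uhlenbeck smallness—is exactly the paper's argument.

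The difference is in how the a priori assumption $\nabla v\in L^{pq}$ is removed. The paper approximates the \emph{domain}: it constructs smooth subdomains $\Omega_k\subset\Omega$ with the uniform $(\delta,r_0)$-Reifenberg property (via \cite{ByunWang07,ByunOkRyu13}), mollifies $f$, solves \eqref{eq:elliptic_sysB} with the original $A$ on each $\Omega_k$, invokes Lemma~\ref{lem:elliptic_regularity} on the smooth domain for the qualitative $L^{pq}$ integrability, applies the conditional estimate uniformly in $k$, and passes to the limit by weak compactness and uniqueness. Your last-sentence ``alternative route'' is precisely this. Your primary proposal instead truncates the nonlinearity, which has the advantage of keeping $\Omega$ fixed and bypassing the specialized domain-approximation construction.

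There is one point in your primary route that needs justification: the claim that $A_\kappa$ can be interpolated ``in a standard way'' while preserving monotonicity \eqref{monotoneJM_elliptic}. Writing $A_\kappa(x,\eta)=\abs{\eta}^{p-2}\eta+\theta(\abs{\eta})\bigl(A(x,\eta)-\abs{\eta}^{p-2}\eta\bigr)$ with a cutoff $\theta$, when $\eta_1$ and $\eta_2$ straddle the transition annulus the cross terms coming from $\theta(\abs{\eta_1})\neq\theta(\abs{\eta_2})$ need not have a sign, and since $A$ is only assumed monotone (not strongly so) there is nothing from $A$ itself to absorb them. Without monotonicity of $A_\kappa$, Minty's method for the existence of $v_\kappa$ is unavailable and the scheme stalls at the first step. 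This may well be repairable for large $\kappa$ by exploiting the strong monotonicity of the $p$-Laplacian together with $\Phi(s)\to 0$, but it is a genuine argument to supply, not something ``standard''. The paper's domain-approximation route sidesteps the issue entirely by never modifying $A$.
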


\begin{proof}
We first prove the global estimate \eqref{elliptic_estiamte2} under the a priori assumption $v \in W_0^{1,pq}(\Omega; \mathbb{R}^N)$.
We rewrite the problem \eqref{eq:elliptic_sysB} as
\begin{align*}
\left\lbrace \ \begin{aligned}
\divergence(\abs{\nabla v}^{p-2} \nabla v) &= \divergence \left( f - A(x,\nabla v) + \abs{\nabla v}^{p-2} \nabla v \right) &&\textrm{ in } \Omega,\\
v&=0&&\textrm{ on } \partial \Omega.
\end{aligned} \right.
\end{align*}
Since the nonlinearity $A$ has $(p-1)$-growth, we have
\begin{equation*}
|A(x,\nabla v)| \leq c_2 (1+|\nabla v|)^{p-1},
\end{equation*}
and hence the a priori assumption $v \in W_0^{1,pq}(\Omega; \mathbb{R}^N)$ yields that
\begin{equation*}
f - A(x,\nabla v) + \abs{\nabla v}^{p-2} \nabla v \in L^{p'q}(\Omega; \mathbb{R}^{n\times N}).
\end{equation*}
Applying Lemma \ref{lem:elliptic_sysB2}, we get
\begin{align*}
\norm{\nabla v}_{L^{pq}(\Omega; \mathbb{R}^{n\times N})} & \leq c \norm{f - A(x,\nabla v) + \abs{\nabla v}^{p-2} \nabla v}_{L^{p'q}(\Omega; \mathbb{R}^{n\times N})}^\frac{1}{p-1} \\
& \leq c_0 \left( \norm{f}_{L^{p'q}(\Omega; \mathbb{R}^{n\times N})}^\frac{1}{p-1}+\norm{A(x,\nabla v) - \abs{\nabla v}^{p-2} \nabla v}_{L^{p'q}(\Omega; \mathbb{R}^{n\times N})}^\frac{1}{p-1}\right),
\end{align*}
where $c_0$ is a positive constant depending on $n$, $N$, $p$, $q_0$, $r_0$ and the the assumptions on $\Omega$.
From \eqref{AH1_elliptic}, we see that for any $\varepsilon \in (0,1)$ there exists $M=M(\varepsilon)>1$ such that
\begin{equation}\label{AH3_elliptic}
\Phi(\rho) \leq \varepsilon, \quad \forall \rho \geq M.
\end{equation}
Then \eqref{AH2_elliptic} and \eqref{AH3_elliptic} give
\begin{align*}
|A(x,\nabla v) - \abs{\nabla v}^{p-2} \nabla v| & \leq \Phi(\abs{\nabla v}) \left( 1+\abs{\nabla v}^{p-1} \right) \\
& = \Phi(\abs{\nabla v}) \left( 1+\abs{\nabla v}^{p-1} \right) \chi_{\left\lbrace |\nabla v| < M \right\rbrace} \\
& \qquad + \Phi(\abs{\nabla v}) \left( 1+\abs{\nabla v}^{p-1} \right)  \chi_{\left\lbrace |\nabla v| \geq M \right\rbrace} \\
& \leq \norm{\Phi}_{L^{\infty}} \left( 1+M^{p-1} \right) + \varepsilon \left( 1+\abs{\nabla v}^{p-1} \right) \\
& \leq \left( 1+\norm{\Phi}_{L^{\infty}} \right) \left( 1+M^{p-1} \right) + \varepsilon \abs{\nabla v}^{p-1}.
\end{align*}
Therefore, it follows that
\begin{align*}
\norm{\nabla v}_{L^{pq}(\Omega; \mathbb{R}^{n\times N})} & \leq c_0 \norm{f}_{L^{p'q}(\Omega; \mathbb{R}^{n\times N})}^\frac{1}{p-1} + c_0 \left( 2 \left( 1+\norm{\Phi}_{L^{\infty}} \right) \left( 1+M^{p-1} \right) \right)^\frac{1}{p-1} \\
& \qquad + c_0 (2\varepsilon)^\frac{1}{p-1} \norm{|\nabla v|^{p-1}}_{L^{p'q}(\Omega; \mathbb{R}^{n\times N})}^\frac{1}{p-1}  \\
& = c_0 \norm{f}_{L^{p'q}(\Omega; \mathbb{R}^{n\times N})}^\frac{1}{p-1} + c_0 \left( 2 \left( 1+\norm{\Phi}_{L^{\infty}} \right) \left( 1+M^{p-1} \right) \right)^\frac{1}{p-1} \\
& \qquad + c_0 (2\varepsilon)^\frac{1}{p-1} \norm{\nabla v}_{L^{pq}(\Omega; \mathbb{R}^{n\times N})}.
\end{align*}
If we take $\varepsilon \in (0,1)$ so small that $c_0 (2\varepsilon)^\frac{1}{p-1} \leq \frac{1}{2}$, then we finally obtain the estimate \eqref{Th:ap2} under the a priori assumption $v \in W_0^{1,pq}(\Omega; \mathbb{R}^N)$.

Let us complete the proof of \eqref{elliptic_estiamte2} by removing the assumption $v \in W_0^{1,pq}(\Omega; \mathbb{R}^N)$.
We shall use an approximation argument as follows.
From Lemma 4.2 in \cite{ByunWang07} (see also Section 5 in \cite{ByunOkRyu13}) and a standard approximation of a Lipschitz domain by smooth domains, there exists a sequence of smooth domains $\left\lbrace \Omega_k \right\rbrace_{k=1}^{\infty}$ with the uniform $(\delta,r_0)$-Reifenberg flatness property such that $\Omega_k \subset \Omega_{k+1} \subset \Omega$ for all $k \in \mathbb{N}$ and
\begin{equation}
\label{domain_approx}
d_H (\partial \Omega_k, \partial \Omega) \longrightarrow 0 \quad \text{as} \ \, k \to \infty,
\end{equation}
where $d_H$ is the Hausdorff distance. \seb{Moreover, the approximation is achieved such that the constant in \eqref{elliptic_estiamte} is independent of $k$.}
We extend $f \in L^{p'q}(\Omega; \mathbb{R}^{n\times N})$ to zero outside $\Omega$ and consider a mollifying kernel $\varphi \in C_0^{\infty}(B_1)$ satisfying $\int_{B_1} \varphi \dx = 1$.
For $\epsilon>0$, we set
\begin{equation*}
\varphi_{\epsilon}(x) = \frac{1}{\epsilon^n} \varphi \left( \frac{x}{\epsilon} \right)
\end{equation*}
and define the mollifications
\begin{equation*}
f_{\epsilon}(x) := (f \ast \varphi_{\epsilon}) (x) \equiv \int_{\mathbb{R}^n} \varphi_{\epsilon} (y) f(x-y) \dy.
\end{equation*}
Then it is clear that $f_{\epsilon} \in C^{\infty}(\mathbb{R}^n; \mathbb{R}^{n\times N})$ and that $f_{\epsilon} \rightarrow f$ in $L^{p'q}(\Omega; \mathbb{R}^{n\times N})$ (see \cite[Appendix C]{Evans10}).
Furthermore, for each $k \in \mathbb{N}$, there exists $0 < \epsilon_k < d_H (\partial \Omega_k, \partial \Omega)$ such that
\begin{equation}
\label{mollification_norm}
\norm{f_{\epsilon_k}}_{L^{p'q}(\Omega_k; \mathbb{R}^{n\times N})} \leq \norm{f}_{L^{p'q}(\Omega; \mathbb{R}^{n\times N})}.
\end{equation}

We now consider the unique weak solution $v_k$ to the problem
\begin{align}
\label{eq:elliptic_sysB_approx}
\left\lbrace \ \begin{aligned}
\divergence A(x, \nabla v_k) &= \divergence f_{\epsilon_k} &&\textrm{ in } \Omega_k,\\
v_k&=0&&\textrm{ on } \partial \Omega_k.
\end{aligned} \right.
\end{align}
By Lemma \ref{lem:elliptic_regularity}, we know that $\nabla v_k \in L^{pq}(\Omega_k; \mathbb{R}^{n\times N})$ qualitatively.
Therefore, it follows from above and \eqref{mollification_norm} that
\begin{align*}
\norm{\nabla v_k}_{L^{pq}(\Omega_k; \mathbb{R}^{n\times N})} &\leq c \left(1+\norm{f_{\epsilon_k}}_{L^{p'q}(\Omega_k; \mathbb{R}^{n\times N})}^\frac{1}{p-1}\right) \\
& \leq c \left(1+\norm{f}_{L^{p'q}(\Omega; \mathbb{R}^{n\times N})}^\frac{1}{p-1}\right),
\end{align*}
where the constant $c$ is independent of $k$ due to the uniform Reifenberg flatness of the domain approximation.
We next let $\widetilde{v_k}$ be the zero extension of $v_k$ from $\Omega_k$ to $\Omega$, that is,
\begin{equation*}
\widetilde{v_k}(x) = \left\lbrace \begin{array}{cl}
v_k(x) & \text{if} \ x \in \Omega_k, \\
0 & \text{if} \ x \in \Omega \setminus \Omega_k.
\end{array} \right.
\end{equation*}
Then we obtain that $\widetilde{v_k} \in W_0^{1,pq}(\Omega; \mathbb{R}^N)$ with the estimate
\begin{equation}
\label{elliptic_uniform_estimate}
\norm{\nabla \widetilde{v_k}}_{L^{pq}(\Omega; \mathbb{R}^{n\times N})} = \norm{\nabla v_k}_{L^{pq}(\Omega_k; \mathbb{R}^{n\times N})} \leq c \left(1+\norm{f}_{L^{p'q}(\Omega; \mathbb{R}^{n\times N})}^\frac{1}{p-1}\right).
\end{equation}
Since $\left\lbrace \widetilde{v_k} \right\rbrace_{k=1}^{\infty}$ is uniformly bounded in $W_0^{1,pq}(\Omega; \mathbb{R}^N)$, there exist a subsequence, which we still denote by $\left\lbrace \widetilde{v_k} \right\rbrace_{k=1}^{\infty}$, and a function $\widetilde{v} \in W_0^{1,pq}(\Omega; \mathbb{R}^N)$ such that
\begin{equation}\label{elliptic_subseq_converge}
\left\lbrace \begin{array}{cl}
\nabla \widetilde{v_k} \rightharpoonup \nabla \widetilde{v} & \text{weakly in } L^{pq}(\Omega; \mathbb{R}^{n\times N}) \\
\widetilde{v_k} \rightarrow \widetilde{v} & \text{strongly in } L^{pq}(\Omega; \mathbb{R}^N)
\end{array} \right.
\end{equation}
as $k \to \infty$.
Then we conclude from \eqref{domain_approx}, \eqref{elliptic_uniform_estimate} and \eqref{elliptic_subseq_converge} that $\widetilde{v}$ is a solution to \eqref{eq:elliptic_sysB} with the estimate
\begin{equation*}
\norm{\nabla \widetilde{v}}_{L^{pq}(\Omega; \mathbb{R}^{n\times N})} \leq c \left(1+\norm{f}_{L^{p'q}(\Omega; \mathbb{R}^{n\times N})}^\frac{1}{p-1}\right).
\end{equation*}
Finally we see from the uniqueness of the weak solution to the problem \eqref{eq:elliptic_sysB} that $\widetilde{v}=v$, and the proof is complete.
\end{proof}

\begin{proof}[The proof of Theorem \ref{thm:elliptic_main}]
Assume that we fixed $q_0$, $\delta_0$ and $r_0$ according to Theorem~\ref{thm:elliptic_main}.
Next, we consider arbitrary given $q\in [1,q_0]$, $f\in L^{p'q}(\Omega; \mathbb{R}^{n\times N})$ and $g \in W^{1,pq}(\Omega; \mathbb{R}^N)$.
Let $v := u-g$.
From the trivial inequality
$$
\norm{\nabla u}_{L^{pq}(\Omega; \mathbb{R}^{n\times N})} \leq \norm{\nabla v}_{L^{pq}(\Omega; \mathbb{R}^{n\times N})} + \norm{\nabla g}_{L^{pq}(\Omega; \mathbb{R}^{n\times N})},
$$
it is enough to prove that
\begin{equation}
\label{goal_elliptic}
\norm{\nabla v}_{L^{pq}(\Omega; \mathbb{R}^{n\times N})} \leq c\left(1+\norm{\nabla g}_{L^{pq}(\Omega; \mathbb{R}^{n\times N})}+\norm{f}_{L^{p'q}(\Omega; \mathbb{R}^{n\times N})}^\frac{1}{p-1}\right)
\end{equation}
with a constant $c$ being independent of $f$ and $g$.

Since $u$ is given as $u=v+g$, we can rewrite the problem \eqref{eq:elliptic_sys} as
\begin{align}
\label{eq:elliptic_sysC}
\left\lbrace \ \begin{aligned}
\divergence A(x,\nabla v) &= \divergence \left( A(x,\nabla v) - A(x,\nabla u) + f \right) &&\textrm{ in } \Omega,\\
v&=0&&\textrm{ on } \partial \Omega,
\end{aligned} \right.
\end{align}
Since $v$ is a solution with zero boundary data, we can apply  Lemma~\ref{lem:elliptic_sysB} to obtain that for arbitrary  $\tilde{q}\in [1,q] \subset [1,q_0]$
\begin{equation}\label{basis_elliptic}
\begin{split}
\int_\Omega \abs{\nabla v}^{p\tilde{q}} \dx & \leq c \left( 1+ \int_\Omega \abs{A(x, \nabla v)-A(x, \nabla u) +f}^{p'\tilde{q}} \dx \right) \\
& \leq c \left( 1+ \int_\Omega \abs{A(x, \nabla v)-A(x, \nabla u)}^{p'\tilde{q}} \dx + \int_\Omega \abs{f}^{p'\tilde{q}} \dx \right)
\end{split}
\end{equation}
whenever the right hand side is finite.
The last term is bounded for any $\tilde{q}\le q$, so we shall justify that also \pk{the first integral on the right hand side of \eqref{basis_elliptic}} is bounded and that it can be estimated uniformly.
Let us observe the following simple algebraic inequality, which is a direct consequence of the elliptic version of \eqref{LC}
\begin{equation}\label{algebra_elliptic}
\begin{split}
\abs{A(x,\nabla v)-A(x,\nabla u)}&\leq c(\abs{\nabla u}+\abs{\nabla v})^{p-2}\abs{\nabla (u-v)}+c\\
&\le c(\abs{\nabla (u-v)}+\abs{\nabla v})^{p-2}\abs{\nabla g}+c\\
&=c(\abs{\nabla v}+\abs{\nabla g})^{p-2}\abs{\nabla g}+c
\end{split}
\end{equation}
with a constant $c$ possibly varying line to line but being independent of $u$, $v$ and $g$.

First, we start with the case $p\in (1,2]$.
It follows from \eqref{algebra_elliptic} that
\begin{equation*}
\begin{split}
\abs{A(x,\nabla v)-A(x,\nabla u)}&\leq c\abs{\nabla g}^{p-1}+c.
\end{split}
\end{equation*}
Hence, substituting this inequality into \eqref{basis_elliptic} and using also the fact that $\Omega$ is bounded, we obtain
\begin{equation*}
\int_\Omega \abs{\nabla v}^{p\tilde{q}} \dx \leq c \left( 1+ \int_\Omega \abs{\nabla g}^{p\tilde{q}} \dx + \int_\Omega \abs{f}^{p'\tilde{q}} \dx \right)
\end{equation*}
and \eqref{goal_elliptic} then directly follows.

Thus, it remains to discuss the case $p\in (2,\infty)$. Formally, i.e., in case we would know that $v\in W_0^{1,\pk{p\tilde{q}}}$,
we could use the Young inequality in \eqref{algebra_elliptic} to observe that
\begin{equation}\label{nic_elliptic}
\begin{aligned}
\abs{A(x,\nabla v)-A(x,\nabla u)}^{p'\tilde{q}}&\leq c\abs{\nabla v}^{\frac{(p-2)p \tilde{q}}{p-1}}\abs{\nabla g}^{p'\tilde{q}}+c\abs{\nabla g}^{p\tilde{q}}+c\\
&\leq \varepsilon \abs{\nabla v}^{p\tilde{q}} + c(\varepsilon)\abs{\nabla g}^{p\tilde{q}}+c.
\end{aligned}
\end{equation}
This inequality, used in \eqref{basis_elliptic} together with the fact that $p\ge 2$ (and so $p'\le p$), then leads to
\begin{equation}
\label{basisq_elliptic}
\int_\Omega \abs{\nabla v}^{p\tilde{q}} \dx \leq c\varepsilon \int_\Omega \abs{\nabla v}^{p\tilde{q}} \dx + c(\varepsilon) \left( 1+ \int_\Omega \abs{\nabla g}^{p\tilde{q}} \dx + \int_\Omega \abs{f}^{p'\tilde{q}} \dx \right).
\end{equation}
Hence choosing $\varepsilon>0$ sufficiently small, and assuming that the right hand side is finite, i.e., $v\in W_0^{1,p\tilde{q}}$, we can use the above inequality and absorb the term involving $\nabla v$ to the left hand side, to obtain
\begin{equation}
\label{basisqq_elliptic}
\int_\Omega \abs{\nabla v}^{p\tilde{q}} \dx \leq c \left( 1+ \int_\Omega \abs{\nabla g}^{p\tilde{q}} \dx + \int_\Omega \abs{f}^{p'\tilde{q}} \dx \right).
\end{equation}
However, to make this procedure rigorous, we would need to know a~priori that $v\in W_0^{1,p\tilde{q}}$,
which is not the case here. Therefore, we proceed more carefully.

First, we define
$$
q_1:=\frac{p-1}{\frac1q+p-2}
$$
and set $\tilde{q}=q_1$ in \eqref{basis_elliptic}.
Since
\[
\frac{1}{1+(p-2)q}+\frac{(p-2)}{\frac1q+(p-2)}=1,
\]
we can deduce from \eqref{algebra_elliptic} with the help of the Young inequality that (using also the fact that $q_1\le q$)
$$
\begin{aligned}
\abs{A(x,\nabla v)-A(x,\nabla u)}^{p'q_1}&\le  c\abs{\nabla v}^\frac{(p-2)p q_1}{p-1}\abs{\nabla g}^\frac{pq_1}{p-1}+c|\nabla g|^{pq_1} +c\\
 &= c\abs{\nabla v}^\frac{(p-2)p }{\frac1q+p-2}\abs{\nabla g}^\frac{pq}{1+q(p-2)}+c|\nabla g|^{pq} +c \\
& \leq c\left(1+\abs{\nabla v}^p+\abs{\nabla g}^{pq}\right).
\end{aligned}
$$
Using this inequality in \eqref{basis_elliptic} with
$\tilde{q}:=q_1$ and the fact
$p'\le p$ (since $p\ge 2$), we have
\begin{equation}
\label{basis1_elliptic}
\int_\Omega \abs{\nabla v}^{pq_1} \dx \leq c \left( 1+ \int_\Omega \abs{\nabla v}^{p} \dx + \int_\Omega \abs{\nabla g}^{pq_1} \dx + \int_\Omega \abs{f}^{p'q_1} \dx \right) < \infty.
\end{equation}
Since $|\nabla v|^p$ is integrable, we see that we improve the integrability of $\nabla v$ to $L^{p q_1}$.
Hence, we can now use \eqref{basisq_elliptic}, which has the right hand side finite and conclude the \pk{a~priori} estimate \eqref{basisqq_elliptic} rigorously.

Next, we continue iteratively. We find for
$$
q_k:=\frac{(p-1)q_{k-1}}{\frac{q_{k-1}}{q}+p-2}
$$
that $q_k<q_{k+1}<q$ and $q_k \to q$ as $k\to \infty$. Expressing now the corresponding dual exponents as
\[
\frac{1}{\big(\frac{q_{k-1}}{q}+(p-2)\big)\frac{q}{q_{k-1}}}+\frac{(p-2)}{\frac{q_{k-1}}q+(p-2)}=1,
\]
we deduce that
 \[
 \abs{\nabla v}^\frac{(p-2)p q_k}{p-1}\abs{\nabla g}^\frac{pq_k}{p-1}
 = \abs{\nabla v}^\frac{(p-2)pq_{k-1} }{\frac{q_{k-1}}{q}+p-2}\abs{\nabla g}^\frac{p q_{k-1}}{\frac{q_{k-1}}{q}+p-2}\leq \abs{\nabla v}^{pq_{k-1}}+\abs{\nabla g}^{pq}.
\]
Hence, combining this inequality with \eqref{algebra_elliptic} yields that (using $q_k\le q$ again)
\begin{equation*}
\begin{split}
\abs{A(x,\nabla v)-A(x,\nabla u)}^{p'q_k}
&\le c(\abs{\nabla v}^{(p-2)p'q_k}\abs{\nabla g}^{p'q_k} +\abs{\nabla g}^{p q_k}+1)\\
&\le c\left(1+\abs{\nabla v}^{pq_{k-1}}+\abs{\nabla g}^{pq}\right).
\end{split}
\end{equation*}
Since $\abs{\nabla v}\in L^{pq_{k-1}}$, we can use the above inequality in \eqref{basis_elliptic} to conclude improved integrability result $\abs{\nabla v} \in L^{pq_k}$.
Consequently, we can now use \eqref{basisq_elliptic} with $\tilde{q}:=q_k$, which has now the finite right hand side and to get the uniform estimate \eqref{basisqq_elliptic} with $\tilde{q}:=q_k$, i.e.,
\begin{equation}
\label{basisqqq_elliptic}
\int_\Omega \abs{\nabla v}^{pq_k} \dx \leq c \left( 1+ \int_\Omega \abs{\nabla g}^{pq_k} \dx + \int_\Omega \abs{f}^{p'q_k} \dx \right).
\end{equation}
\pk{Note that the constant $c>0$ in the previous estimate may be chosen independent of $k$}. Since $q_k \to q$ as $k\to \infty$, we can now let $k\to \infty$ in \eqref{basisqqq_elliptic}, which gives the desired conclusion \eqref{goal_elliptic}. This completes the proof.
\end{proof}

\section{Parabolic Systems: Homogeneous boundary and initial data}\label{S2}
In the following two sections we will need the following notations for parabolic intrinsic cylinders:
\[
Q^\lambda_r=Q^\lambda_r(t,x)=(t-\lambda^{2-p}r^2,t)\times B_r(x).
\]
If $p=2$ we have the standard parabolic cylinders 
\[
Q_r=Q_r(t,x)=(t-r^2,t)\times B_r(x).
\]
In this section we consider the following system:
\begin{align}
\label{eq:sysB}
\left\lbrace \ \begin{aligned}
  \partial_t v-\divergence A(t,x, \nabla v) &= -\divergence
  f &&\textrm{ in }(0,T)\times \Omega,\\
  v&=0&&\textrm{ on }(0,T)\times \partial \Omega,\\
  v(0,\cdot)&=0&&\textrm{ in } \Omega.
\end{aligned} \right.
\end{align}
To obtain a result \pk{for} the above system, we shall consider the following parabolic $p$-Laplacian system:
\begin{align}
\label{eq:sysB2}
\left\lbrace \ \begin{aligned}
  \partial_t w-\divergence(\abs{\nabla w}^{p-2} \nabla w) &= -\divergence
  f &&\textrm{ in }(0,T)\times \Omega,\\
  w&=0&&\textrm{ on }(0,T)\times \partial \Omega,\\
  w(0,\cdot)&=0&&\textrm{ in } \Omega.
\end{aligned} \right.
\end{align}
\pk{It} can be extended to the following system
\begin{align}
\label{eq:sysinfty}
\left\lbrace \ \begin{aligned}
  \partial_t w-\divergence(\abs{\nabla w}^{p-2} \nabla w) &= -\divergence
  (f\chi_{[0,T]}) &&\textrm{ in }(-\infty,\infty)\times \Omega,\\
  w&=0&&\textrm{ on }(-\infty,\infty)\times \partial \Omega,\\
  w&=0&&\textrm{ in } (-\infty,0]\times \Omega.
\end{aligned} \right.
\end{align}
Clearly, we find the following natural estimate:
\begin{align}
\sup_{-\infty < t < \infty} \int_\Omega \abs{w(t,\cdot)}^2 \dx +\int_{-\infty}^\infty\int_\Omega \abs{\nabla w}^p\dx\dt \leq c\int_0^T\int_\Omega\abs{f}^{p'}\dx\dt,
\end{align}
see for instance \cite{AceMin07, ByunOk16SIAM}.
From \cite{AceMin07,ByunOk16SIAM} one can deduce the following intrinsic local estimate:
For any $1\leq q_0<\infty$, there exists $R_0 = R_0(n,N,p,q_0) \in (0,1)$ such that for any $z_0 = (t_0,x_0) \in [0,T) \times \overline{\Omega}$ and any parabolic cylinder $Q_{2R}(z_0) \subset (-T,2T)\times \Omega$ with $0 < R \leq \frac{1}{2} R_0$ and $1 \leq q \leq q_0$,
\begin{equation}
\label{local_est}
\dint_{Q_R(z_0) \cap \Omega_T} |\nabla w|^{pq} \dz \leq c \left\lbrace \left( \dint_{Q_{2R}(z_0) \cap \Omega_T} |\nabla w|^p \dz \right)^q + \dint_{Q_{2R}(z_0) \cap \Omega_T} \left[ |f|^{p'q} + 1 \right] \dz \right\rbrace^{d},
\end{equation}
for some positive constant $c$ depending only on $n$, $N$, $p$ and
$q_0$, where $$d =
\begin{cases}
\frac{p}{2} & \text{if} \quad p\geq 2,\\
\frac{2p}{(n+2)p-2n} & \text{if} \quad \frac{2n}{n+2}<p<2,
\end{cases}
$$ is the scaling deficit constant. It implies immediately the following Lemma.

\begin{lemma}
\label{lem:scal}
Let $Q_R$ be a parabolic cylinder and $0 < R \leq \frac{1}{2} R_0$. Assume that for some $q\in [1,q_0]$,
\begin{align}
\bigg(\dint_{Q_{2R}^\lambda(z_0)} |\nabla w|^p\chi_\Omega \dz \bigg)^q
+\dint_{Q_{2R}^\lambda (z_0)}  |f|^{p'q}\chi_{Q_T}\dz \leq K\lambda^{pq}.
\end{align}
Then
\begin{align}
\dint_{\pk{Q_R^\lambda}(z_0) \cap \Omega_T} |\nabla w|^{pq}\chi_\Omega \dz\leq c\lambda^{pq},
\end{align}
where the constant $c$ just depends on $n$, $N$, $p$, $q_0$, $\Omega$ and $K$.
\end{lemma}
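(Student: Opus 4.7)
My plan is to reduce Lemma~\ref{lem:scal} to the non-intrinsic local estimate \eqref{local_est} by a parabolic rescaling adapted to $\lambda$ and $R$. The intrinsic cylinder $Q_R^\lambda$ is precisely designed so that under the change of variables $(t,x)=(t_0+\lambda^{2-p}R^2\tau,\, x_0+Ry)$ together with $\tilde w(\tau,y):=(\lambda R)^{-1}w(t,x)$, the parabolic $p$-Laplacian system \eqref{eq:sysinfty} becomes scale invariant. A short algebraic check---matching $\beta=\alpha^{p-2}\gamma^p$ with $\alpha=(\lambda R)^{-1}$, $\gamma=R$, $\beta=\lambda^{2-p}R^2$---shows that $\tilde w$ solves a $p$-Laplacian system of the same type on the standard cylinder $Q_2(0)$ inside the rescaled domain $\widetilde\Omega:=R^{-1}(\Omega-x_0)$, with rescaled source $\tilde f(\tau,y):=\lambda^{1-p}f(t,x)$.

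Next, I would track how the averaged norms transform. A straightforward change of variables yields
\begin{equation*}
\dint_{Q_{2R}^\lambda(z_0)}|\nabla w|^p\dz=\lambda^p\dint_{Q_2(0)}|\nabla\tilde w|^p\dz,\qquad \dint_{Q_{2R}^\lambda(z_0)}|f|^{p'q}\dz=\lambda^{pq}\dint_{Q_2(0)}|\tilde f|^{p'q}\dz,
\end{equation*}
so the intrinsic hypothesis collapses into the dimensionless bound
\begin{equation*}
\bigg(\dint_{Q_2(0)}|\nabla\tilde w|^p\chi_{\widetilde\Omega}\dz\bigg)^q+\dint_{Q_2(0)}|\tilde f|^{p'q}\chi_{\widetilde Q_T}\dz\le K,
\end{equation*}
where $\widetilde Q_T$ denotes the image of $(0,T)\times\Omega$ under the rescaling.

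I would then apply the local estimate \eqref{local_est} to $\tilde w$ on the standard cylinder $Q_1(0)$. The expression inside the braces on the right-hand side of \eqref{local_est} is now bounded by $c(K+1)$ with a constant independent of $\lambda$ and $R$, so raising it to the scaling deficit $d$ still yields a $\lambda$-independent constant. Rescaling back via $\dint_{Q_R^\lambda(z_0)\cap\Omega_T}|\nabla w|^{pq}\dz=\lambda^{pq}\dint_{Q_1(0)\cap\widetilde\Omega_T}|\nabla\tilde w|^{pq}\dz$ then delivers the claim.

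The conceptual content---and the only step that deserves pause---is the observation that \emph{the scaling deficit $d$ in \eqref{local_est} causes no loss in intrinsic geometry}: after the $\lambda$-adapted rescaling, the right-hand side of \eqref{local_est} is already of order one, so raising to the power $d$ is harmless. I do not foresee serious technical obstacles beyond careful bookkeeping; the one item worth a line of verification is that Reifenberg flatness transfers correctly under dilation (the constant $\delta$ is preserved, while the admissibility radius scales as $r_0/R$), so that \eqref{local_est} is in fact applicable to $\tilde w$ on $\widetilde\Omega$ with the same structural constants.
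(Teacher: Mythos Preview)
Your proposal is correct and follows essentially the same approach as the paper: both arguments perform the intrinsic parabolic rescaling $\tilde w(s,y)=(\lambda R)^{-1}w(t_0+\lambda^{2-p}R^2 s,\,x_0+Ry)$, observe that $\tilde w$ solves a $p$-Laplacian system with source $\tilde f=\lambda^{1-p}f$, note that the hypothesis becomes the dimensionless bound $(\dashint_{Q_2}|\nabla\tilde w|^p)^q+\dashint_{Q_2}|\tilde f|^{p'q}\le K$, and then apply \eqref{local_est} at unit scale so that the scaling deficit $d$ acts on an $O(1)$ quantity. Your write-up is in fact slightly more detailed than the paper's (you make the role of the deficit explicit and flag the preservation of Reifenberg flatness under dilation), but the substance is identical.
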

\begin{proof}
The proof is by the application of \eqref{local_est} to
\[
\tilde{w}(s,y)=\frac{w(\lambda^{2-p}R^2(s+t_0),R(y+x_0))}{R\lambda},
\]
which is (\seb{on its domain of definition}) a solution to
\begin{align}
\partial_t\tilde{w}-\divergence(\abs{\nabla \tilde{w}}^{p-2}\nabla \tilde{w}) = \divergence \tilde{f},
\end{align}
where
\[
\tilde{f}(s,y)=\frac{f(\lambda^{2-p}R^2(s+t_0),R(y+x_0))}{\lambda^{p-1}}.
\]
Indeed, we find that in this case (on the reflected system)
\begin{align}
\bigg(\dint_{Q_{2}(0)} |\nabla \tilde w|^p \dz \bigg)^q
+\dint_{Q_{2}(0)}  |\tilde f|^{p'q}\dz \leq K,
\end{align}
which implies an estimate that is independent of $\lambda$, and hence the result follows.
\end{proof}

\begin{remark}
\label{rem:fullsp}
The above scaling implies that if we have the following solution on \pk{the whole space}
\begin{align}
\label{eq:sys-whole-space}
\left\lbrace \ \begin{aligned}
  \partial_t w-\divergence(\abs{\nabla w}^{p-2} \nabla w) &= -\divergence
  f &&\textrm{ in }(0,\infty)\times \setR^n,\\
  w(0,\cdot)&=0&&\textrm{ in } \setR^n,
\end{aligned} \right.
\end{align}
we find the homogeneous estimate
\begin{equation}\label{Le:ap2}
\norm{w}_{X^{p,q}} \leq c\norm{f}_{L^{p'q}((0,\infty); L^{p'q}(\setR^n; \mathbb{R}^{n\times N}))}^\frac{1}{p-1}.
\end{equation}
The same estimate holds on the half space problem. Indeed for $H^+ = \mathbb{R}^n \cap \{x_n>0\}$,
\pk{the estimate holds for  solutions to}
\begin{align}
\label{eq:sys-half-space}
\left\lbrace \ \begin{aligned}
  \partial_t w-\divergence(\abs{\nabla w}^{p-2} \nabla w) &= -\divergence
  f &&\textrm{ in }(0,\infty)\times H^+,\\
    w&=0&&\textrm{ on } (0,\infty)\times \partial H^+,
  \\
  w(0,\cdot)&=0&&\textrm{ in } H^+,
\end{aligned} \right.
\end{align}
as we can reflect these solutions to the full space.
\end{remark}

We now state the main theorem in this section.

\begin{theorem}
\label{thm:plaphom}
Let $p\in (\frac{2n}{n+2},\infty)$ and $N \in \setN$. For every $q_0\in [1,\infty)$, there exist a small constant $\delta_0>0$ depending on $n$, $N$, $p$ and $q_0$ such that for any $q \in [1,q_0]$ and $\delta \in (0,\delta_0]$, we have the following properties:
if $\Omega\subset \setR^n$ is a bounded $(\delta,r_0)$-Reifenberg flat domain for some $r_0>0$, i.e., it satisfies Definition~\ref{def:reif} and if $f\in L^{p'q}(0,T; L^{p'q}(\Omega; \mathbb{R}^{n\times N}))$,
 then the solution $w$ to \eqref{eq:sysinfty} belongs to $X^{p,q}_0$ and the following estimate holds
\begin{align}\label{Th:hom}
\norm{w}_{X^{p,q}}
&\leq c\norm{f}_{L^{p'q}(0,T; L^{p'q}(\Omega; \mathbb{R}^{n\times N}))}^\frac{1}{p-1}+c\norm{f}_{L^{p'}(0,T; L^{p'}(\Omega; \mathbb{R}^{n\times N}))}^{\frac{2+p(q-1)}{2(p-1)q}}
\end{align}
with a constant $c$ depending only on $n$, $N$, $p$, $q_0$, $r_0$, \seb{the assumptions on} $\Omega$ and $T$.
In addition, we have
\begin{align}
\label{eq:hom1}
\int_0^T\int_\Omega \abs{\nabla w}^{pq}\dz\leq c\int_0^T\int_\Omega \abs{f}^{p'q}\dz+c\bigg(\int_0^T\int_\Omega \abs{\nabla w}^{p}\dz\bigg)^{1+\frac{p(q-1)}{2}}.
\end{align}
\end{theorem}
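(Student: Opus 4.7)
The plan is to first establish the distributional gradient bound \eqref{eq:hom1} via an intrinsic-cylinder Calder\'{o}n--Zygmund decomposition, and then to extract \eqref{Th:hom} by combining \eqref{eq:hom1} with the basic energy identity $\int|\nabla w|^p\dz\le c\int|f|^{p'}\dz$ (obtained by testing \eqref{eq:sysinfty} with $w$) together with the $\partial_t w$ bound read off directly from the equation. Since $(p-1)p'=p$, one has $\||\nabla w|^{p-1}\|_{L^{p'q}}=\|\nabla w\|_{L^{pq}}^{p-1}$, hence $\|\partial_t w\|_{W^{-1,p'q}}\le c\|\nabla w\|_{L^{pq}}^{p-1}+c\|f\|_{L^{p'q}}$; the exponent identity $\tfrac{p'(1+p(q-1)/2)}{pq}=\tfrac{2+p(q-1)}{2(p-1)q}$ then turns \eqref{eq:hom1} and the energy bound into exactly the two summands on the right of \eqref{Th:hom}.

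For \eqref{eq:hom1}, I would work on the upper level sets $E_\lambda:=\{|\nabla w|>\lambda\}$ using intrinsic cylinders $Q_r^\lambda$. Fix a threshold $\lambda_0$ adapted to the intrinsic scaling: roughly $\lambda_0^p\sim c\bigl(\dint_{Q_T}|\nabla w|^p\dz\bigr)^{p/2}+c\dint_{Q_T}|f|^{p'}\dz$, chosen large enough that on every admissible intrinsic cylinder $Q_R^{\lambda_0}(z_0)$ (namely one with $\lambda_0^{2-p}R^2\le T$ and $R\le r_0$) the intrinsic mean of $|\nabla w|^p\chi_\Omega+|f|^{p'}\chi_{Q_T}$ lies below $\lambda_0^p$. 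For each $\lambda>\lambda_0$ and each Lebesgue point $z_0\in E_\lambda$, continuity and monotonicity of the intrinsic mean $r\mapsto\dint_{Q_r^\lambda(z_0)}|\nabla w|^p\chi_\Omega\dz$ single out a largest stopping radius $r(z_0)\in(0,r_0)$ at which this mean equals $c_1\lambda^p$; the doubled cylinder inherits the same mean bound up to a fixed constant.

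The stopping family then splits into good cylinders, on which additionally $\dint_{2Q_{r(z_0)}^\lambda}|f|^{p'q}\chi_{Q_T}\dz\le c_1\lambda^{pq}$, and bad cylinders. On good cylinders Lemma~\ref{lem:scal} applies and delivers $\int_{Q_{r(z_0)}^\lambda}|\nabla w|^{pq}\chi_\Omega\dz\le c\lambda^{pq}|Q_{r(z_0)}^\lambda|$, while bad cylinders satisfy $|Q_{r(z_0)}^\lambda|\le c\lambda^{-pq}\int_{2Q_{r(z_0)}^\lambda}|f|^{p'q}$ and so feed directly into the $|f|^{p'q}$ term. A Vitali subselection together with the stopping identity $|Q_{r(z_0)}^\lambda|\le c\lambda^{-p}\int_{2Q_{r(z_0)}^\lambda}|\nabla w|^p\chi_\Omega$ yields the distributional bound
\begin{equation*}
\int_{E_\lambda}|\nabla w|^{pq}\chi_\Omega\dz\le c\lambda^{p(q-1)}\int_{Q_T}|\nabla w|^p\dz+c\int_{\{|f|^{p'q}>c\lambda^{pq}\}}|f|^{p'q}\dz.
\end{equation*}
Integrating this by layer cake on $\lambda\in(\lambda_0,\infty)$ (with Fubini on the $f$-term) and adding the trivial sub-$\lambda_0$ contribution $\lambda_0^{pq}|Q_T|$ produces \eqref{eq:hom1}; the deficit exponent $1+p(q-1)/2$ emerges from the chosen power $\lambda_0^p\sim(\int|\nabla w|^p)^{p/2}/|Q_T|^{p/2}$ via the relation $\lambda_0^{p(q-1)}\int|\nabla w|^p\sim(\int|\nabla w|^p)^{1+p(q-1)/2}$.

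The main obstacle is calibrating $\lambda_0$ in the intrinsic geometry so that the deficit exponent $1+p(q-1)/2$ emerges (rather than the naive $q$ of the elliptic setting): this hinges on balancing the admissibility constraint $\lambda^{2-p}r^2\le T$ against the intrinsic volume $|Q_r^\lambda|\sim\lambda^{2-p}r^{n+2}$, and is also exactly what allows the deficit to disappear in the singular regime $p\le 2$ (Remark~\ref{rem:main}), since there the volume factor $\lambda^{2-p}$ works in the opposite direction. A secondary issue is that the boundary case of Lemma~\ref{lem:scal} relies on \eqref{local_est} being valid on Reifenberg-flat domains, which is where the smallness requirement on $\delta_0$ enters.
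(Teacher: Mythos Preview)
Your strategy---an intrinsic-cylinder stopping-time argument \`a la Acerbi--Mingione---is a valid route to \eqref{eq:hom1}, though note that your displayed inequality with the unrestricted $\int_{Q_T}|\nabla w|^p$ on the right cannot be integrated by layer cake over $\lambda\in(\lambda_0,\infty)$ (that diverges for $q>1$); rather, you should evaluate it at the single level $\lambda=\lambda_0$, add the sub-$\lambda_0$ piece, and then your exponent computation $\lambda_0^{p(q-1)}\int|\nabla w|^p\sim(\int|\nabla w|^p)^{1+p(q-1)/2}$ already gives \eqref{eq:hom1}. The paper, however, takes a genuinely different and shorter route that bypasses intrinsic cylinders and the $\lambda_0$-calibration entirely. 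It first \emph{normalizes} to the case $\max\{\lVert f\rVert_{L^{p'q}},\lVert\nabla w\rVert_{L^p}\}\le1$, then runs a Calder\'on--Zygmund covering of a maximal-function upper level set using \emph{standard} (non-intrinsic) parabolic cubes at a \emph{single fixed level} $m$---chosen large enough that the weak-$L^1$ maximal bound forces all cube radii below $r_0$---applies Lemma~\ref{lem:scal} with $\lambda=1$ on each cube, and concludes $\int|\nabla w|^{pq}\le c(R_0)$ under the normalization. The deficit exponent then arises \emph{solely} from undoing the normalization via the global time rescaling $\tilde w(s,y)=w(\lambda^{2-p}s,y)/\lambda$: with $M=\lVert f\rVert_{L^{p'q}}$ and $M_1=\lVert\nabla w\rVert_{L^p}$, one picks $\lambda$ so that $\max\{M^{p'q}\lambda^{p-2-pq},\,M_1^p\lambda^{-2}\}=1$, and scaling back produces exactly the two terms on the right of \eqref{eq:hom1}. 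Your approach is the one familiar from the intrinsic-geometry literature and is more robust for localized statements; the paper's approach is more economical and makes the origin of the scaling deficit completely transparent---it lives entirely in the final rescaling step, not in the covering argument.
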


\begin{remark}
The famous scaling deficit that is usually necessary in order to get homogeneous estimates,
as is widely known in the frame work of evolutionary $p$-Laplace, pops up in the theorem above in a remarkable form.
The above theorem shifts the scaling deficit to the lower order term.
The homogeneity of the estimate with respect to the right hand side is necessary in order to include inhomogeneous boundary data.
Observe that the case $p=2$ reduces to the classic parabolic estimates for the heat equation.
Moreover, in the case $p\in (\frac{2n}{n+2},2]$, one finds that $1+\frac{p}{2}(q-1)\leq q$ and hence
\begin{align}
\label{eq:hom-ple2}
\int_0^T\int_\Omega \abs{\nabla w}^{pq} \dz\leq c\int_0^T\int_\Omega \abs{f}^{p'q} \dz+c\bigg(\int_0^T\int_\Omega \abs{\nabla w}^{p} \dz\bigg)^{q}+c,
\end{align}
which possesses no scaling deficit.
\end{remark}

\begin{proof}[Proof of Theorem~\ref{thm:plaphom}]
  First of all, we may use teh fact that we can extend $w$ as a global in time solution to $f\equiv f\chi_{Q_T}$ with related uniform estimates. 
We start by fixing $M$ and $M_1$ such that
\[
\norm{f}_{L^{p'q}([0,\infty)\times \Omega)}=M
\quad \text{ and } \quad \norm{\nabla w}_{L^{p}([0,\infty)\times \Omega)}=M_1.
\]
Let us first assume that
\[
\max\set{M_1,M}\leq 1.
\]
Now we introduce the parabolic maximal operator as
 \[
 \Mcal (g)(t,x):=\sup_{r>0}\dashint_{t-r^2}^{t+r^2}\dashint_{B_r(x)}\abs{g(s,y)}\dy\ds,
 \]
and we define the upper level sets \pk{for $m>0$ as}
\[
O^m=\Bigset{z\in \setR^{n+1}\, :\, \Mcal\Big(\abs{\nabla w}^p\chi_{\Omega}+\frac{\abs{f\chi_{Q_T}}^{p'q}}{m^{p(q-1)}}\Big)(z)>m^p}.
\]
Then the \Calderon-Zygmund covering (\cite{BreDieSch13, DieRuzWol10}) implies the following:
There are parabolic cylinders $Q_i\subset O^m$ such that $2Q_i$ have finite intersection, meaning that there is $N_0\in\mathbb N$ such that all $2Q_i$ can be placed into $N_0$ classes so that the sets $2Q_i$ in each class are disjoint.
\[
 \dashint_{2Q_i}\abs{\nabla w}^p\chi_\Omega+\frac{\abs{f\chi_{Q_T}}^{\pk{p}q}}{m^{p'(q-1)}}\dz \leq  c m^p
\]
and
\[
O^m\subset \bigcup_i Q_i.
\]
We define $r_i$ as the radius of $Q_i$.
By the weak-$L^1$ estimate, we have
\[
m^p \sum_i \abs{2Q_i}\leq m^p 2^{4n+4}\abs{O^m}\leq c.
\]
It follows that
\[
r_i^{n+2}\leq \frac{c}{m^p},
\]
which implies that $r_i\in (0,r_0)$ if $m$ is chosen large enough depending only on $n,R_0$. \seb{This is the point, where the Reifenberg flatness is used.}
Moreover, since
\[
\dashint_{2Q_i}\chi_{Q_T}\abs{f}^{p'q}\dz+ \bigg(\dashint_{2Q_i}\abs{\nabla w}^p\chi_\Omega\dz\bigg)^q\leq cm^{\pk{pq}},
\]
we find from Lemma~\ref{lem:scal} with $\lambda=1$ that
\[
\dashint_{Q_i}\abs{\nabla w}^{pq}\chi_\Omega\dz\leq c(m) {m^q}.
\]
Using the above we estimate, we get
\begin{align*}
\int_{0}^\infty\int_\Omega \abs{\nabla w}^{pq}\dz&\leq \int_{\pk{(O^m)^c\cap\left([0,\infty)\times\Omega\right)}}\abs{\nabla w}^{pq}\dz+\int_{O^m\cap \left([0,\infty)\times\Omega\right)}\abs{\nabla w}^{pq}\dz
\\
&\leq m^{p(q-1)}\int_{(O^m)^c\cap\left([0,\infty)\times\Omega\right)}\abs{\nabla w}^{p}\dz+\sum_i \int_{Q_i\cap Q_T}\abs{\nabla w}^{pq}\dz
\\
&\leq m^{p(q-1)}\int_{(O^m)^c\cap\left([0,\infty)\times\Omega\right)}\abs{\nabla w}^{p}\dz+c(m)\sum_i\abs{Q_i}m^q
\\
&\leq m^{p(q-1)}\int_{(O^m)^c\cap\left([0,\infty)\times\Omega\right)}\abs{\nabla w}^{p}\chi_\Omega\dz+c(m)\sum_i\abs{Q_i}\pk{erase} m^{p}
\\
&\leq c(R_0).
\end{align*}
This implies the following:
\begin{align}
\label{eq:M=1}
\text{If }\max\set{M_1,M}\leq 1, \ \text{then }\int_0^\infty \int_\Omega \abs{\nabla w}^{pq} \dz\leq c(R_0).
\end{align}

For general $M, M_1\neq 0$, we use the fact that we may rescale as follows. We know that
  \[
\tilde{w}(s,y)=\frac{w(\lambda^{2-p}s,y)}{\lambda}
\]
 is a solution to
\begin{align}
\partial_t\tilde{w}-\divergence(\abs{\nabla \tilde{w}}^{p-2}\nabla \tilde{w}) = \divergence \tilde{f},
\end{align}
where
\[
\tilde{f}(s,y)=\frac{f(\lambda^{2-p}s,y)}{\lambda^{p-1}}.
\]
Fixing $\lambda>0$ so that
\[
\max\set{M^{p'q}\lambda^{p-2-pq},M_1^p\lambda^{-2}}=1,
\]
we obtain
\[
\int_0^\infty\int_\Omega\abs{\tilde{f}}^{p'q} \dy \ds=\int_0^\infty\int_\Omega\abs{f}^{p'q}\lambda^{-pq} \dy \, \lambda^{p-2}\, \dt\leq M^{p'q}\lambda^{p-2-pq} \leq 1,
\]
\[
\int_0^\infty\int_\Omega \abs{{\nabla\tilde{w} }}^{p}\dy\ds=\int_0^\infty\int_\Omega\abs{\nabla w}^{p}\lambda^{-p} \dy \, \lambda^{p-2}\, \dt\leq M_1^{p}\lambda^{-2} \leq 1.
\]
Then we discover from \eqref{eq:M=1} that
\begin{equation}
\label{eq:scaled_estimate}
c(R_0)\geq \int_0^\infty\int_\Omega\abs{\nabla \tilde{w}}^{pq}\, \dy\ds=\int_0^\infty\int_\Omega\abs{\nabla w}^{pq}\lambda^{p-2-pq}\, \dy \dt.
\end{equation}
If $\max\set{M^{p'q}\lambda^{p-2-pq},M_1^p\lambda^{-2}} = M^{p'q}\lambda^{p-2-pq} = 1$, then \eqref{eq:scaled_estimate} gives
\begin{equation}
\label{eq:hom1_case1}
\int_0^\infty\int_\Omega\abs{\nabla w}^{pq} \, \dy \dt \leq c M^{p'q} = c\int_0^\infty\int_\Omega \abs{f}^{p'q} \dz.
\end{equation}
If $\max\set{M^{p'q}\lambda^{p-2-pq},M_1^p\lambda^{-2}} = M_1^p\lambda^{-2} = 1$, then \eqref{eq:scaled_estimate} yields
\begin{equation}
\label{eq:hom1_case2}
\int_0^\infty\int_\Omega\abs{\nabla w}^{pq} \, \dy \dt \leq c M_1^{\frac{p(2+pq-p)}{2}} = c\bigg(\int_0^\infty\int_\Omega \abs{\nabla w}^{p} \dz\bigg)^{\frac{2+pq-p}{2}}.
\end{equation}
Combining \eqref{eq:hom1_case1} and \eqref{eq:hom1_case2}, we obtain the desired estimate \eqref{eq:hom1}.
Finally, \eqref{Th:hom} follows by the energy inequality and the properties of the weak time derivative.
\end{proof}

For the system \eqref{eq:sysB} with a sufficiently smooth domain $\Omega$, we have the following qualitative estimate.

\begin{lemma}
\label{lem2:main2}
Let $p\in (\frac{2n}{n+2},\infty)$, $N\in \setN$ and $A$ satisfy \eqref{Car1}--\eqref{monotoneJM} and \eqref{AH1}--\eqref{AH2}.
Suppose that $\Omega$ is a $C^1$-domain, and let $v$ be a solution to \eqref{eq:sysB}.
If $f\in L^{p'q}((0,T; L^{p'q}(\Omega; \mathbb{R}^{n\times N}))$, then we have $v \in X^{p,q}_0$.
\end{lemma}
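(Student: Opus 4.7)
The plan is to mimic the strategy used in the elliptic analogue (Lemma~\ref{lem:elliptic_regularity}, which underlies the proof of Theorem~\ref{thm:elliptic_main}) and combine it with the parabolic $p$-Laplace Calder\'on--Zygmund estimate from Theorem~\ref{thm:plaphom}. First note that any $C^1$-domain is $(\delta,r_0)$-Reifenberg flat for arbitrarily small $\delta$, provided $r_0$ is chosen small enough, so Theorem~\ref{thm:plaphom} is at our disposal on $\Omega$ for any $q\in[1,q_0]$.

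I would begin by rewriting \eqref{eq:sysB} as a perturbation of the parabolic $p$-Laplacian,
\[
\partial_t v - \divergence(|\nabla v|^{p-2}\nabla v) = -\divergence F, \qquad F := f + |\nabla v|^{p-2}\nabla v - A(t,x,\nabla v),
\]
and noting that the asymptotic Uhlenbeck condition \eqref{AH1}--\eqref{AH2} implies, for any $\varepsilon\in(0,1)$ and some $M=M(\varepsilon)$,
\[
\bigl||\nabla v|^{p-2}\nabla v - A(t,x,\nabla v)\bigr| \leq \|\Phi\|_\infty(1+M^{p-1}) + \varepsilon\,|\nabla v|^{p-1}.
\]
Assuming momentarily that $v\in X^{p,q}_0$, Theorem~\ref{thm:plaphom} applied to the rewritten system, together with the identity $p'(p-1)=p$, then yields
\[
\|\nabla v\|_{L^{pq}} \leq C_\varepsilon\Bigl(1+\|f\|_{L^{p'q}}^{1/(p-1)}+\|f\|_{L^{p'}}^{(2+p(q-1))/(2(p-1)q)}\Bigr) + C\,\varepsilon^{1/(p-1)}\|\nabla v\|_{L^{pq}}.
\]
Taking $\varepsilon$ small enough and absorbing the last term on the right gives a finite bound on $\|\nabla v\|_{L^{pq}}$, which will be uniform along an approximation sequence.

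The principal obstacle is to remove the a priori assumption $v\in X^{p,q}_0$. I would regularize the nonlinearity by truncation, setting
\[
A^{(k)}(t,x,\eta):=|\eta|^{p-2}\eta + \chi_k(|\eta|)\bigl(A(t,x,\eta)-|\eta|^{p-2}\eta\bigr),
\]
with $\chi_k$ a smooth cut-off equal to $1$ for $|\eta|\leq k$ and $0$ for $|\eta|\geq 2k$. Then $A^{(k)}$ coincides with the pure $p$-Laplacian outside a large ball, so the corresponding perturbation for the unique solution $v^{(k)}$ of the regularized problem is globally bounded; applying Theorem~\ref{thm:plaphom} directly to $v^{(k)}$ gives $v^{(k)}\in X^{p,q}_0$ qualitatively. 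The a priori estimate above, whose constants depend only on $\|\Phi\|_\infty$ and $M(\varepsilon)$ and are thus uniform in $k$, yields uniform $X^{p,q}_0$-bounds, and a Minty-type monotonicity argument then passes to the limit $v^{(k)}\to v$ in $X^{p,q}_0$. The truly delicate point is ensuring that $A^{(k)}$ retains enough monotonicity in the transition annulus $\{k<|\eta|<2k\}$; if a naive cut-off fails to preserve it, the alternative is to regularize the source by smooth $f_k\to f$ in $L^{p'q}$ and invoke the smooth-data theory of \cite{Bog14,Bog15} at each step before letting $k\to\infty$.
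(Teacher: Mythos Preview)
Your approach is considerably more elaborate than the paper's. The paper's proof is two lines: extend $v$ and $f$ by zero backward to $(-T,T)\times\Omega$, so that the extended $\widetilde v$ solves the same asymptotically Uhlenbeck system on the longer cylinder with initial data at $t=-T$; then invoke \cite[Theorem~2.3]{Bog14}, which already contains the full Calder\'on--Zygmund theory for such systems on $C^1$-domains, valid away from the initial time. Since $(0,T)\subset(-T+\varepsilon,T)$ for small $\varepsilon>0$, the conclusion follows. No absorption argument is carried out here---the a~priori estimate you derive is precisely what the paper establishes \emph{afterwards} in the proof of Theorem~\ref{thm:main2}, where Lemma~\ref{lem2:main2} plays only the role of supplying qualitative $X^{p,q}_0$-regularity on the smooth approximating domains $\Omega_k$.

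Your primary route via truncation carries a genuine gap, which you correctly flag but do not resolve: the operator $A^{(k)}(\eta)=(1-\chi_k(|\eta|))\,|\eta|^{p-2}\eta+\chi_k(|\eta|)\,A(t,x,\eta)$ is a convex combination of two monotone maps with an $\eta$-dependent weight, and such combinations are generally \emph{not} monotone across the transition annulus. Without monotonicity, neither the existence/uniqueness of $v^{(k)}$ via the standard theory nor the Minty identification of the limit is available, so the scheme does not close. Your fallback---regularize $f$ and cite \cite{Bog14,Bog15}---is essentially the paper's move, except that the smoothing of $f$ is superfluous (\cite[Theorem~2.3]{Bog14} applies for $f\in L^{p'q}$ directly) and you omit the key backward-in-time extension, which is what upgrades the interior-in-time statement of \cite{Bog14} to cover the full interval $(0,T)$ including the initial layer.
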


\begin{proof}
Let us consider the extended system
\begin{align}
\label{eq:sysB_extended}
\left\lbrace \ \begin{aligned}
  \partial_t \widetilde{v}-\divergence A(t,x, \nabla \widetilde{v}) &= -\divergence \widetilde{f} &&\textrm{ in }(-T,T)\times \Omega,\\
  \widetilde{v}&=0&&\textrm{ on }(-T,T)\times \partial \Omega,\\
  \widetilde{v}(-T,\cdot)&=0&&\textrm{ in } \Omega.
\end{aligned} \right.
\end{align}
where $\widetilde{v}$, $\widetilde{f}$ are defined by
\begin{equation*}
\widetilde{v}(t,x) = \left\lbrace \begin{array}{cl}
v(t,x) & \text{if} \ 0 < t \leq T, \\
0 & \text{if} \ -T \leq t \leq 0,
\end{array} \right.
\quad \text{and} \quad
\widetilde{f}(t,x) = \left\lbrace \begin{array}{cl}
f(t,x) & \text{if} \ 0 < t \leq T, \\
0 & \text{if} \ -T \leq t \leq 0.
\end{array} \right.
\end{equation*}
Then it follows from \cite[Theorem 2.3]{Bog14} that $\widetilde{v} \in X^{p,q}_0((-T+\varepsilon,T) \times \Omega)$ for any $\varepsilon \in (0,2T)$.
Therefore, we conclude from the definition of $\widetilde{v}$ that $v \in X^{p,q}_0((0,T) \times \Omega)$.
\end{proof}

For the system \eqref{eq:sysB} with a nonsmooth domain $\Omega$, we have the following particular result.
\begin{theorem}
\label{thm:main2}
Let $p\in (\frac{2n}{n+2},\infty)$, $N\in \setN$ and $A$ satisfy \eqref{Car1}--\eqref{monotoneJM} and \eqref{AH1}--\eqref{AH2}. Then for every $q_0 \in [1,\infty)$, there exists a small constant $\delta_0>0$ depending on $n$, $N$, $p$, $q_0$ and $A$ such that for any $q \in [1,q_0]$ and any $\delta \in (0,\delta_0]$, we have the following properties: If $\Omega\subset \setR^n$ is a bounded $(\delta,r_0)$-Reifenberg flat domain for some $r_0 \in (0,1)$ and
if $f\in L^{p'q}((0,T; L^{p'q}(\Omega; \mathbb{R}^{n\times N}))$, then the unique weak solution $v$ to \eqref{eq:sysB} belongs to $X^{p,q}_0$ and the estimate
\begin{equation}\label{Th:ap2}
\norm{v}_{X^{p,q}} \leq c\left(1+\norm{f}_{L^{p'q}(0,T; L^{p'q}(\Omega; \mathbb{R}^{n\times N}))}^\frac{1}{p-1}+\norm{f}_{L^{p'}(0,T; L^{p'}(\Omega; \mathbb{R}^{n\times N}))}^{\frac{2+p(q-1)}{2(p-1)q}} \right)
\end{equation}
holds with a constant $c$ depending only on $n$, $N$, $p$, $q_0$, $r_0$, $T$ and the assumed properties of $A$ and $\Omega$.
\end{theorem}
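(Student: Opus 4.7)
The plan is to mirror the strategy used for the elliptic estimate in Theorem~\ref{thm:elliptic_main}, replacing Lemma~\ref{lem:elliptic_sysB2} by the parabolic homogeneous $p$-Laplacian estimate Theorem~\ref{thm:plaphom}. The key identity is to rewrite \eqref{eq:sysB} as a perturbed $p$-Laplacian,
\begin{align*}
\left\lbrace \ \begin{aligned}
\partial_t v-\divergence(\abs{\nabla v}^{p-2}\nabla v) &= -\divergence \widetilde f &&\text{in }(0,T)\times \Omega,\\
v&=0 &&\text{on }(0,T)\times \partial\Omega,\\
v(0,\cdot)&=0 &&\text{in }\Omega,
\end{aligned}\right.
\end{align*}
where $\widetilde f := f - A(t,x,\nabla v) + \abs{\nabla v}^{p-2}\nabla v$. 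Assuming provisionally that $v\in X^{p,q}_0$, Theorem~\ref{thm:plaphom} yields
\begin{equation*}
\norm{v}_{X^{p,q}} \le c\,\norm{\widetilde f}_{L^{p'q}(0,T;L^{p'q}(\Omega))}^{1/(p-1)} + c\,\norm{\widetilde f}_{L^{p'}(0,T;L^{p'}(\Omega))}^{\frac{2+p(q-1)}{2(p-1)q}},
\end{equation*}
so the whole proof reduces to controlling the two norms of $\widetilde f$ in terms of $f$ and a small multiple of $\norm{\nabla v}_{L^{pq}}$ that can be absorbed.

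For the absorption step I will exploit the asymptotic Uhlenbeck property exactly as in Lemma~\ref{lem:elliptic_sysB}: combining \eqref{AH1} and \eqref{AH2}, for every $\varepsilon\in(0,1)$ there exists $M=M(\varepsilon)$ such that
\begin{equation*}
\abs{A(t,x,\eta)-\abs{\eta}^{p-2}\eta} \le (1+\norm{\Phi}_{L^\infty})(1+M^{p-1}) + \varepsilon\abs{\eta}^{p-1}.
\end{equation*}
Taking this to the $L^{p'q}$-power gives $\norm{\widetilde f}_{L^{p'q}}^{1/(p-1)} \le c\,\norm{f}_{L^{p'q}}^{1/(p-1)} + c(\varepsilon) + c\,\varepsilon^{1/(p-1)}\norm{\nabla v}_{L^{pq}}$. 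For the $L^{p'}$-term I use the standard energy estimate $\norm{\nabla v}_{L^p}^{p-1}\le c\norm{f}_{L^{p'}} + c$ together with the $(p-1)$-growth \eqref{growthJM} to obtain $\norm{\widetilde f}_{L^{p'}}\le c\norm{f}_{L^{p'}}+c$, whence
\begin{equation*}
\norm{\widetilde f}_{L^{p'}}^{\frac{2+p(q-1)}{2(p-1)q}} \le c\,\norm{f}_{L^{p'}}^{\frac{2+p(q-1)}{2(p-1)q}} + c.
\end{equation*}
Choosing $\varepsilon$ small enough to absorb $c\varepsilon^{1/(p-1)}\norm{\nabla v}_{L^{pq}}$ into $\norm{v}_{X^{p,q}}$ on the left produces \eqref{Th:ap2}. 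Unlike the elliptic proof for $p>2$, no iteration in the integrability exponent is needed here, because the correction $\widetilde f - f$ only involves $\abs{\nabla v}^{p-1}$ (no factor of $\nabla g$).

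The remaining issue, and the main technical obstacle, is to remove the a~priori assumption $v\in X^{p,q}_0$ — this is exactly parallel to the passage in the proof of Lemma~\ref{lem:elliptic_sysB}. I will approximate $\Omega$ from inside by a sequence of smooth domains $\Omega_k$ with the same $(\delta,r_0)$-Reifenberg flatness constants and approximate $f$ by smooth $f_{\epsilon_k}$ with $\norm{f_{\epsilon_k}}_{L^{p'q}(\Omega_k)}\le \norm{f}_{L^{p'q}(\Omega)}$ and convergent $L^{p'}$-norms. On the smooth pair $(\Omega_k, f_{\epsilon_k})$, Lemma~\ref{lem2:main2} guarantees the qualitative regularity $v_k\in X^{p,q}_0((0,T)\times\Omega_k)$, so the absorption argument above applies rigorously and yields bounds that are uniform in $k$ because the Reifenberg constants and the constants in Theorem~\ref{thm:plaphom} are preserved. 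Zero-extending $v_k$ to $(0,T)\times\Omega$, I extract a weak limit in $X^{p,q}_0$; monotonicity \eqref{monotoneJM} together with Minty's trick ensures that the limit satisfies \eqref{eq:sysB}, and uniqueness identifies it with $v$. The delicate point in this passage will be verifying that the extensions share a uniform estimate on the time derivative in $W^{-1,p'q}$, which follows by testing the equation with $W^{1,p'q\cdot(p'q)'}_0$-functions and using the $(p-1)$-growth bound on $A$ together with the already-uniform control of $\nabla v_k$ in $L^{pq}$.
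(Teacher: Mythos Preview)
Your proposal is correct and follows essentially the same approach as the paper: rewrite \eqref{eq:sysB} as a perturbed $p$-Laplacian, apply Theorem~\ref{thm:plaphom}, use the asymptotic Uhlenbeck condition \eqref{AH1}--\eqref{AH2} for the absorption, and remove the a~priori assumption by approximating $\Omega$ with smooth domains (invoking Lemma~\ref{lem2:main2}) and passing to the limit. The only cosmetic difference is that the paper applies the form \eqref{eq:hom1} (with the $\norm{\nabla v}_{L^p}$-term bounded directly via the energy inequality) rather than \eqref{Th:hom}, and it does not mollify $f$ in the parabolic case---both variations are inessential.
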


\begin{proof}
The existence of a weak solution to \eqref{eq:sysB} follows from the theory of monotone operators or via Galerkin approximation; we refer the reader to \cite{DiB93, Lions69, Show97} and the references therein.

To prove the uniqueness, we assume that there exist two solutions $v_1$ and $v_2$ to the problem \eqref{eq:sysB}.
Notice that $v_1(t,\cdot)-v_2(t,\cdot) \in W^{1,p}_0(\Omega;\mathbb{R}^N)$ for almost all $t \in (0,T)$.
Taking this function for a test function in the weak formulation \eqref{D3}, we obtain that for almost all $t \in (0,T)$,
\begin{multline*}
\langle v_i(t), (v_1-v_2)(t) \rangle + \int_{\Omega} A(t,x,\nabla v_i(t,x)) \cdot \nabla (v_1-v_2)(t,x) \dx \\
= \int_{\Omega}f(t,x)\cdot \nabla (v_1-v_2)(t,x) \dx, \quad \forall i=1,2.
\end{multline*}
Subtracting each other and integrating over $(0,T)$, we have
\begin{multline*}
\int_0^T \langle (v_1-v_2)(t), (v_1-v_2)(t) \rangle \dt \\
+ \int_0^T \int_\Omega \left( A(t,x,\nabla v_1(t,x)) - A(t,x,\nabla v_2(t,x)) \right) \cdot \nabla \left( v_1(t,x)-v_2(t,x) \right) \dx \dt = 0.
\end{multline*}
Since $\langle (v_1-v_2)(t), (v_1-v_2)(t) \rangle \geq 0$, it follows that
\begin{equation*}
 \int_0^T \int_\Omega \left( A(t,x,\nabla v_1(t,x)) - A(t,x,\nabla v_2(t,x)) \right) \cdot \nabla \left( v_1(t,x)-v_2(t,x) \right) \dx \dt \leq 0.
\end{equation*}
The strict monotonicity of $A$ leads to $\nabla v_1 = \nabla v_2$ almost everywhere in $(0,T) \times \Omega$.
Thus $v_1 \equiv v_2$ in $(0,T) \times \Omega$, which is due to the zero trace.

We now prove the global estimate \eqref{Th:ap2} under the a priori assumption $v \in X^{p,q}_0$.
We rewrite the problem \eqref{eq:sysB} as
\begin{align*}
\left\lbrace \ \begin{aligned}
  \partial_t v-\divergence(\abs{\nabla v}^{p-2} \nabla v) &= -\divergence \left( f - A(t,x,\nabla v) + \abs{\nabla v}^{p-2} \nabla v \right) &&\textrm{ in }(0,T)\times \Omega,\\
  v&=0&&\textrm{ on }(0,T)\times \partial \Omega,
  \\
  v(0,\cdot)&=0&&\textrm{ in } \Omega.
\end{aligned} \right.
\end{align*}
Notice that \eqref{growthJM} implies
\begin{equation*}
|A(t,x,\nabla v)| \leq c_2 (1+|\nabla v|)^{p-1},
\end{equation*}
and hence the a priori assumption $v \in X^{p,q}_0$ yields that
\begin{equation*}
f - A(t,x,\nabla v) + \abs{\nabla v}^{p-2} \nabla v \in L^{p'q}((0,T; L^{p'q}(\Omega; \mathbb{R}^{n\times N})).
\end{equation*}
Applying Theorem \ref{thm:plaphom}, we get
\begin{equation}
\begin{split}
\int_0^T\int_\Omega \abs{\nabla v}^{pq}\dx \dt & \leq c\int_0^T\int_\Omega \abs{f - A(t,x,\nabla v) + \abs{\nabla v}^{p-2} \nabla v}^{p'q}\dx \dt \\
& \quad +c\bigg(\int_0^T\int_\Omega \abs{\nabla v}^{p}\dx \dt\bigg)^{1+\frac{p(q-1)}{2}} \\
& \leq c\int_0^T\int_\Omega \abs{A(t,x,\nabla v) - \abs{\nabla v}^{p-2} \nabla v}^{p'q}\dx \dt \\
& \quad +c\int_0^T\int_\Omega \abs{f}^{p'q}\dx \dt +c\bigg(\int_0^T\int_\Omega \abs{\nabla v}^{p}\dx \dt\bigg)^{1+\frac{p(q-1)}{2}}
\end{split}\label{para_est_0}
\end{equation}
From the structural conditions \eqref{coercivityJM}-\eqref{monotoneJM}, we obtain the following energy estimate of the problem \eqref{eq:sysB}:
\begin{equation}
\label{para_est_1}
\sup_{0<t<T} \int_\Omega \abs{v(t,\cdot)}^2 \dx + \int_0^T \int_\Omega \abs{\nabla v}^p\dx\dt \leq c\int_0^T\int_\Omega\abs{f}^{p'}\dx\dt.
\end{equation}
On the other hand, we see from \eqref{AH1} that for any $\varepsilon \in (0,1)$ there exists $M=M(\varepsilon)>1$ such that
\begin{equation}\label{AH3}
\Phi(\rho) \leq \varepsilon, \quad \forall \rho \geq M.
\end{equation}
Then \eqref{AH2} and \eqref{AH3} give
\begin{equation}
\begin{split}
|A(t,x,\nabla v) - \abs{\nabla v}^{p-2} \nabla v| & \leq \Phi(\abs{\nabla v}) \left( 1+\abs{\nabla v}^{p-1} \right) \\
& = \Phi(\abs{\nabla v}) \left( 1+\abs{\nabla v}^{p-1} \right) \chi_{\left\lbrace |\nabla v| < M \right\rbrace} \\
& \qquad + \Phi(\abs{\nabla v}) \left( 1+\abs{\nabla v}^{p-1} \right)  \chi_{\left\lbrace |\nabla v| \geq M \right\rbrace} \\
& \leq \norm{\Phi}_{L^{\infty}} \left( 1+M^{p-1} \right) + \varepsilon \left( 1+\abs{\nabla v}^{p-1} \right) \\
& \leq \left( 1+\norm{\Phi}_{L^{\infty}} \right) \left( 1+M^{p-1} \right) + \varepsilon \abs{\nabla v}^{p-1}.
\end{split}\label{para_est_2}
\end{equation}
Therefore, it follows from \eqref{para_est_0}, \eqref{para_est_1} and \eqref{para_est_2} that
\begin{align*}
\int_0^T\int_\Omega \abs{\nabla v}^{pq}\dx \dt & \leq c\int_0^T\int_\Omega \abs{f}^{p'q}\dx \dt +c\bigg(\int_0^T\int_\Omega \abs{f}^{p'}\dx \dt\bigg)^{1+\frac{p(q-1)}{2}} \\
& \qquad +c |\Omega| T \left( 1+\norm{\Phi}_{L^{\infty}} \right) \left( 1+M^{p-1} \right)^{p'q} + c\varepsilon^{p'q} \int_0^T\int_\Omega \abs{\nabla v}^{pq}\dx \dt.
\end{align*}
If we take $\varepsilon \in (0,1)$ so small that $c\varepsilon^{p'q} \leq \frac{1}{2}$, then we finally obtain the estimate \eqref{Th:ap2} under the a priori assumption $v \in X^{p,q}_0$.

Let us complete the proof of \eqref{Th:ap2} by removing the assumption $v \in X^{p,q}_0$.
We shall use an approximation argument as follows.
From \cite[Lemma 4.2]{ByunWang07} with \cite[Section 5]{ByunOkRyu13} and a standard approximation of a Lipschitz domain by smooth domains, there exists a sequence of smooth domains $\left\lbrace \Omega_k \right\rbrace_{k=1}^{\infty}$ with the uniform $(\delta,r_0)$-Reifenberg flatness property such that $\Omega_k \subset \Omega_{k+1} \subset \Omega$ for all $k \in \mathbb{N}$ and
\begin{equation}
d_H (\partial \Omega_k, \partial \Omega) \longrightarrow 0 \quad \text{as} \ \, k \to \infty,
\end{equation}
where $d_H$ is the Hausdorff distance.
Let $v_k$ be the unique weak solution to the problem
\begin{align}
\label{eq:sysB_approx}
\left\lbrace \ \begin{aligned}
  \partial_t v_k-\divergence A(t,x, \nabla v_k) &= -\divergence
  f &&\textrm{ in }(0,T)\times \Omega_k,\\
  v_k&=0&&\textrm{ on }(0,T)\times \partial \Omega_k,\\
  v_k(0,\cdot)&=0&&\textrm{ in } \Omega_k.
\end{aligned} \right.
\end{align}
By Lemma \ref{lem2:main2}, we know that $v_k \in X^{p,q}_0((0,T) \times \Omega_k)$.
Therefore, it follows from the a priori global estimate that
\begin{align*}
\norm{v_k}_{X^{p,q}((0,T) \times \Omega_k)} &\leq c \left(1+\norm{f}_{L^{p'q}(0,T; L^{p'q}(\Omega_k; \mathbb{R}^{n\times N}))}^\frac{1}{p-1}+\norm{f}_{L^{p'}(0,T; L^{p'}(\Omega_k; \mathbb{R}^{n\times N}))}^{\frac{2+p(q-1)}{2(p-1)q}}\right) \\
& \leq c \left(1+\norm{f}_{L^{p'q}(0,T; L^{p'q}(\Omega; \mathbb{R}^{n\times N}))}^\frac{1}{p-1}+\norm{f}_{L^{p'}(0,T; L^{p'}(\Omega; \mathbb{R}^{n\times N}))}^{\frac{2+p(q-1)}{2(p-1)q}}\right),
\end{align*}
where the constant $c$ is independent of $k$ \seb{by the uniformity of the estimates and the construction of the approximation of the domain}.
\pk{Next,} let $\widetilde{v_k}$ be the zero extension of $v_k$ from $(0,T) \times \Omega_k$ to $(0,T) \times \Omega$, that is,
\begin{equation*}
\widetilde{v_k}(t,x) = \left\lbrace \begin{array}{cl}
v_k(t,x) & \text{if} \ (t,x) \in (0,T) \times \Omega_k, \\
0 & \text{if} \ (t,x) \in (0,T) \times ( \Omega \setminus \Omega_k).
\end{array} \right.
\end{equation*}
Then we obtain that $\widetilde{v_k} \in X^{p,q}_0((0,T) \times \Omega)$ with the estimate
\begin{equation}
\begin{split}
\norm{\widetilde{v_k}}_{X^{p,q}((0,T) \times \Omega)} & = \norm{v_k}_{X^{p,q}((0,T) \times \Omega_k)} \\
& \leq c \left(1+\norm{f}_{L^{p'q}(0,T; L^{p'q}(\Omega; \mathbb{R}^{n\times N}))}^\frac{1}{p-1}+\norm{f}_{L^{p'}(0,T; L^{p'}(\Omega; \mathbb{R}^{n\times N}))}^{\frac{2+p(q-1)}{2(p-1)q}}\right).
\end{split}\label{uniform_estimate}
\end{equation}
Hence $\left\lbrace \widetilde{v_k} \right\rbrace_{k=1}^{\infty}$ is uniformly bounded in $X^{p,q}_0((0,T) \times \Omega)$.
Also it follows from \eqref{eq:sysB_approx}, \eqref{growthJM} and \eqref{uniform_estimate} that $\left\lbrace (\widetilde{v_k})_t \right\rbrace_{k=1}^{\infty}$ is uniformly bounded in $L^{p'q}((0,T);W^{-1,p'q}(\Omega;\setR^N))$.
In view of Aubin-Lions Lemma, there exist a subsequence, which we still denote by $\left\lbrace \widetilde{v_k} \right\rbrace_{k=1}^{\infty}$, and a function $\widetilde{v} \in X^{p,q}_0((0,T) \times \Omega)$ such that
\begin{equation}\label{subseq_converge}
\left\lbrace \begin{array}{cl}
\widetilde{v_k} \rightharpoonup \widetilde{v} & \text{weakly in } L^{pq}((0,T); W_0^{1,pq}(\Omega;\setR^N)) \\
\widetilde{v_k} \rightarrow \widetilde{v} & \text{strongly in } L^{pq}((0,T); L^{pq}(\Omega;\setR^N)) \\
(\widetilde{v_k})_t \rightharpoonup (\widetilde{v})_t & \text{weakly in } L^{p'q}((0,T);W^{-1,p'q}(\Omega;\setR^N))
\end{array} \right.
\end{equation}
as $k \to \infty$.
Then we conclude from \eqref{uniform_estimate} and \eqref{subseq_converge} that $\widetilde{v}$ is a solution to \eqref{eq:sysB} with the estimate
\begin{equation*}
\norm{\widetilde{v}}_{X^{p,q}((0,T) \times \Omega)} \leq c \left(1+\norm{f}_{L^{p'q}(0,T; L^{p'q}(\Omega; \mathbb{R}^{n\times N}))}^\frac{1}{p-1}+\norm{f}_{L^{p'}(0,T; L^{p'}(\Omega; \mathbb{R}^{n\times N}))}^{\frac{2+p(q-1)}{2(p-1)q}}\right).
\end{equation*}
Finally we see from the uniqueness of the weak solution to the problem \eqref{eq:sysB} that $\widetilde{v}=v$, and the proof is complete.
\end{proof}

\begin{remark}
From Theorem \ref{thm:main2}, we see that the unique weak solution $v$ to \eqref{eq:sysB} belongs to $X^{p,q}_0$.
Hence, it follows from the proof of Theorem \ref{thm:main2} that the estimate
\begin{equation}\label{Th:ap2_2}
\int_0^T\int_\Omega \abs{\nabla v}^{pq}\dz \leq c \left\{ 1 + \int_0^T\int_\Omega \abs{f}^{p'q}\dz + \left(\int_0^T\int_\Omega \abs{\nabla v}^{p}\dz\right)^{1+\frac{p(q-1)}{2}} \right\}
\end{equation}
also holds with a constant $c$ depending only on $n$, $N$, $p$, $q_0$, $r_0$, $\Omega$, $T$ and $A$.
\end{remark}

\section{The proof of Theorem~\ref{thm:main} for general data} \label{S3}
Assume that we fixed $q_0$, $\delta_0$ and $r_0$ according to
Theorem~\ref{thm:main}. Next, we consider any $q\in [1,q_0]$, $f\in
L^{p'q}(0,T; L^{p'q}(\Omega; \mathbb{R}^{n\times N}))$ and
a suitable $L^{pq}$-couple
$(u_0,g_0)$. For this couple, we then can find an extension $g\in
X^{p,q}$ and\pk{, } since $q\ge 1$, we also know that there exists a unique
(for a given extension) weak solution $u\in X^{p,1}$ to
\eqref{eq:sysA}, where $u=v+g$ with $v\in X^{p,1}_0$ fulfilling
$v(0, \cdot)=0$. Using the obvious
inequality
$$
\norm{u}_{X^{p,q}} \le \norm{v}_{X^{p,q}} + \norm{g}_{X^{p,q}},
$$
we see that to prove Theorem~\ref{thm:main}, it is enough to show that
\begin{equation}
\begin{split}
\norm{v}_{X^{p,q}} & \le c \Bigg(1+ \norm{g}_{X^{p,q}} + \norm{f}_{L^{p'q}(0,T; L^{p'q}(\Omega;\mathbb{R}^{n\times N}))}^{\frac{1}{p-1}} \\
& \qquad \qquad \quad +\norm{g}_{X^{p,q}}^{\frac{2+p(q-1)}{2q}} +\norm{f}_{L^{p'}(0,T; L^{p'}(\Omega; \mathbb{R}^{n\times N}))}^{\frac{2+p(q-1)}{2(p-1)q}} \Bigg),
\end{split}\label{goal}
\end{equation}
with a constant $c$ being independent of $f$ and $g$. Thus, in what follows, we focus only on the proof of \eqref{goal}.

First of all, we represent $\partial_t g$ as a divergence of a vector-valued function whose norm is controlled in an appropriate space. To do so, we look for a unique solution to the following problem: find $w:\Omega \to \mathbb{R}^N$ solving
$$
\left\lbrace \ \begin{aligned}
-\divergence \left( |\nabla w(t)|^{(p'q)'-2}\nabla w \right) &= \partial_t g(t) &&\textrm{ in }\Omega,\\
w&=0 &&\textrm{ on } \Omega.
\end{aligned} \right.
$$
Here, we can use the standard monotone operator theory to obtain the existence of a unique $w\in W^{1,(p'q)'}_0(\Omega;\mathbb{R}^N)$ solving the above problem. Moreover, we know that it continuously depends on $\partial_t g$, then it is also Bochner measurable and we have an estimate
\begin{equation*}
\norm{\nabla w(t)}_{L^{(p'q)'}(\Omega;\mathbb{R}^{n \times N})}^{(p'q)'}\le c\norm{\partial_t g(t)}^{p'q}_{W^{-1,p'q}(\Omega;\mathbb{R}^N)}.
\end{equation*}
Thus, if we set $G:=|\nabla w|^{(p'q)'-2} \nabla w$, then we have that $\skp{\partial_t g(t)}{\psi}=\skp{G(t)}{\nabla \psi}$ for almost every $t\in [0,T]$ and all $\psi \in W^{1,(p'q)'}_0(\Omega; \mathbb{R}^N)$. In addition, we have an estimate
\begin{equation}\label{G:est}
\int_0^T \norm{G}_{L^{p'q}(\Omega;\mathbb{R}^{n \times N})}^{p'q}\dt \le c\int_0^T \norm{\partial_t g}^{p'q}_{W^{-1,p'q}(\Omega;\mathbb{R}^N)}\dt.
\end{equation}
Next, since $u$ is given as $u=v+g$, we can use the definition of $G$ and rewrite the problem \eqref{eq:sysA} as
\begin{align}
\label{eq:sysC}
\left\lbrace \ \begin{aligned}
  \partial_tv-\divergence A(t,x, \nabla v) &= -\partial_t g-\divergence \left(A(t,x, \nabla v)-A(t,x, \nabla u) + f \right)
  \\
  &= -\divergence \left(A(t,x, \nabla v)-A(t,x, \nabla u) +f-G \right)
  &&\textrm{ in }(0,T)\times \Omega,\\
  v&=0&&\textrm{ on } (0,T)\times \partial \Omega,\\
  v(0,\cdot)&=0&&\textrm{ in } \Omega.
\end{aligned} \right.
\end{align}
Since $v$ is a solution with zero initial and boundary data, we can
apply  Theorem~\ref{thm:main2} to
conclude that for any
$\tilde{q}\in [1,q]$ (which is surely less or equal to $q_0$),
\begin{equation}
\begin{split}
& \int_0^T\int_\Omega \abs{\nabla v}^{p\tilde{q}} \dx \dt \\
& \qquad \leq c\Bigg\{ 1+ \int_0^T\int_\Omega \abs{A(t,x, \nabla v)-A(t,x, \nabla u) +f-G}^{p'\tilde{q}} \dx \dt \\
& \qquad \qquad \qquad \qquad \qquad \qquad \qquad \qquad \qquad + \left(\int_0^T\int_\Omega \abs{\nabla v}^{p} \dx \dt \right)^{1+\frac{p(\tilde{q}-1)}{2}} \Bigg\}
\\
& \qquad \leq c\Bigg\{ 1+\int_0^T\int_\Omega \abs{A(t,x,\nabla v)-A(t,x,\nabla u)}^{p'\tilde{q}} \dx \dt \\
& \qquad \qquad \quad + \norm{g}_{X^{p,q}}^{p\tilde{q}} + \norm{f}^{p'\tilde{q}}_{L^{p'q}(0,T;L^{p'q}(\Omega; \mathbb{R}^{n\times N}))}+ \left(\int_0^T\int_\Omega \abs{\nabla v}^{p} \dx \dt \right)^{1+\frac{p(\tilde{q}-1)}{2}} \Bigg\},
\end{split}\label{basis_0}
\end{equation}
whenever the right hand side is finite. Notice here that for the second inequality, we used~\eqref{G:est}, the H\"{o}lder inequality and the fact that $\Omega$ is bounded.
Since $v=u-g$, we get
\begin{equation}
\begin{split}
\int_0^T\int_\Omega \abs{\nabla v}^{p}\dx \dt & \leq \int_0^T\int_\Omega \abs{\nabla u}^{p}\dx \dt + \int_0^T\int_\Omega \abs{\nabla g}^{p}\dx \dt \\
& \leq c \left( \int_0^T\int_\Omega \abs{f}^{p'}\dx \dt + \int_0^T\int_\Omega \abs{\nabla g}^{p}\dx \dt \right),
\end{split}\label{basis_energy}
\end{equation}
where we have used an energy estimate of the problem \eqref{eq:sysA}.
Combining \eqref{basis_0} and \eqref{basis_energy} gives
\begin{equation}
\begin{split}
& \int_0^T\int_\Omega \abs{\nabla v}^{p\tilde{q}} \dx \dt \\
& \quad \leq c\Bigg\{ 1+\int_0^T\int_\Omega \abs{A(t,x,\nabla v)-A(t,x,\nabla u)}^{p'\tilde{q}} \dx \dt \\
& \qquad \qquad + \norm{g}_{X^{p,q}}^{p\tilde{q}} + \norm{f}^{p'\tilde{q}}_{L^{p'q}(0,T;L^{p'q}(\Omega; \mathbb{R}^{n\times N}))} + \norm{g}_{X^{p,1}}^{p+\frac{p^2(\tilde{q}-1)}{2}} + \norm{f}_{L^{p'}(0,T;L^{p'}(\Omega; \mathbb{R}^{n\times N}))}^{p'+\frac{p'p(\tilde{q}-1)}{2}} \Bigg\}.
\end{split}\label{basis}
\end{equation}
Since the last four terms are
bounded for any $\tilde{q}\le q$, it suffices to show that the first integral term in \eqref{basis} is bounded in a universal way. We start with a simple algebraic
inequality, which is a direct consequence of \eqref{LC}
\begin{equation}\label{algebra}
\begin{split}
\abs{A(t,x,\nabla v)-A(t,x,\nabla u)}&\leq c(\abs{\nabla u}+\abs{\nabla v})^{p-2}\abs{\nabla (u-v)}+c\\
&\le c(\abs{\nabla (u-v)}+\abs{\nabla v})^{p-2}\abs{\nabla g}+c\\
&=c(\abs{\nabla v}+\abs{\nabla g})^{p-2}\abs{\nabla g}+c,
\end{split}
\end{equation}
with a constant $c$ possibly varying line to line but being independent of $u$, $v$ and $g$.

First, we consider the case that
$p\in (\frac{2n}{n+2},2]$. Then it follows from \eqref{algebra} that
\begin{equation*}
\begin{split}
\abs{A(t,x,\nabla v)-A(t,x,\nabla u)}&\leq c\abs{\nabla g}^{p-1}+c.
\end{split}
\end{equation*}
Hence, substituting this inequality into \eqref{basis} and using also the fact that $\Omega$ is bounded, we obtain
\begin{equation*}
\begin{split}
\int_0^T\int_\Omega \abs{\nabla v}^{p\tilde{q}} \dx \dt & \leq c\Bigg(1 +\norm{g}_{X^{p,q}}^{p\tilde{q}} + \norm{f}^{p'\tilde{q}}_{L^{p'q}(0,T;L^{p'q}(\Omega; \mathbb{R}^{n\times N}))} \\
& \qquad \qquad  + \norm{g}_{X^{p,1}}^{p+\frac{p^2(\tilde{q}-1)}{2}}
+ \norm{f}_{L^{p'}(0,T;L^{p'}(\Omega; \mathbb{R}^{n\times
N}))}^{p'+\frac{p'p(\tilde{q}-1)}{2}} \Bigg),
\end{split}
\end{equation*}
and \eqref{goal} then directly follows if for the estimate of
$\partial_t v$, one uses \eqref{eq:sysC} together with the growth
assumption on $A$, see \eqref{growthJM}.

We next discuss the case that $p\in (2,\infty)$. Formally, i.e., in case we would know that $v\in
X^{p,\tilde{q}}$, we could use the Young inequality in
\eqref{algebra} to observe that
\begin{equation}\label{nic}
\begin{aligned}
\abs{A(t,x,\nabla v)-A(t,x,\nabla u)}^{p'\tilde{q}}&\leq c\abs{\nabla v}^{\frac{(p-2)p \tilde{q}}{p-1}}\abs{\nabla g}^{p'\tilde{q}}+c\abs{\nabla g}^{p\tilde{q}}+c\\
&\leq \varepsilon \abs{\nabla v}^{p\tilde{q}} + c(\varepsilon)\abs{\nabla g}^{p\tilde{q}}+c.
\end{aligned}
\end{equation}
This inequality, used in \eqref{basis} together with the fact that $p\ge 2$ (and so $p'\le p$), then leads to
\begin{equation}
\begin{split}
\int_0^T\int_\Omega \abs{\nabla v}^{p\tilde{q}} \dx \dt &\leq c\varepsilon \int_0^T\int_\Omega \abs{\nabla v}^{p\tilde{q}} \dx \dt\\
&\quad + c(\varepsilon)\Bigg(1 +\norm{g}_{X^{p,q}}^{p\tilde{q}} + \norm{f}^{p'\tilde{q}}_{L^{p'q}(0,T;L^{p'q}(\Omega; \mathbb{R}^{n\times N}))} \\
& \qquad \qquad \qquad + \norm{g}_{X^{p,1}}^{p+\frac{p^2(\tilde{q}-1)}{2}} + \norm{f}_{L^{p'}(0,T;L^{p'}(\Omega; \mathbb{R}^{n\times N}))}^{p'+\frac{p'p(\tilde{q}-1)}{2}} \Bigg).
\end{split}\label{basisq}
\end{equation}
Hence choosing $\varepsilon>0$ sufficiently small, and assuming that
the right hand side is finite, i.e., $v\in X^{p,\tilde{q}}$, we can
use the above inequality and absorb the term involving $\nabla v$ to
the left hand side to discover that
\begin{equation}
\begin{split}
\int_0^T\int_\Omega \abs{\nabla v}^{p\tilde{q}} \dx \dt &\leq c\Bigg(1 +\norm{g}_{X^{p,q}}^{p\tilde{q}} + \norm{f}^{p'\tilde{q}}_{L^{p'q}(0,T;L^{p'q}(\Omega; \mathbb{R}^{n\times N}))} \\
& \qquad \qquad + \norm{g}_{X^{p,1}}^{p+\frac{p^2(\tilde{q}-1)}{2}} + \norm{f}_{L^{p'}(0,T;L^{p'}(\Omega; \mathbb{R}^{n\times N}))}^{p'+\frac{p'p(\tilde{q}-1)}{2}} \Bigg).
\end{split}\label{basisqq}
\end{equation}
We now remove the a~priori assumption that $v\in X^{p,q}$.
Write
$$
q_1:=\frac{p-1}{\frac1q+p-2}
$$
and we want to set $\tilde{q}=q_1$ in \eqref{basis}. Observing that 
\[
\frac{1}{1+(p-2)q}+\frac{(p-2)}{\frac1q+(p-2)}=1,
\]
it follows from \eqref{algebra}
along with the help of the Young inequality that (using also the
fact that $q_1\le q$)
$$
\begin{aligned}
\abs{A(t,x,\nabla v)-A(t,x,\nabla u)}^{p'q_1}&\le  c\abs{\nabla v}^\frac{(p-2)p q_1}{p-1}\abs{\nabla g}^\frac{pq_1}{p-1}+c|\nabla g|^{pq_1} +c\\
 &= c\abs{\nabla v}^\frac{(p-2)p }{\frac1q+p-2}\abs{\nabla g}^\frac{pq}{1+q(p-2)}+c|\nabla g|^{pq} +c \\
 &\leq c\left(1+\abs{\nabla v}^p+\abs{\nabla g}^{pq}\right).
\end{aligned}
$$
Applying this inequality to
\eqref{basis} with $\tilde{q}:=q_1$ and recalling that $p'\le p$
(since $p\ge 2$),
we find that
\begin{equation}
\begin{split}
&\int_0^T\int_\Omega \abs{\nabla v}^{pq_1} \dx \dt
\\
&\quad \leq c\Bigg(1+  \int_0^T\int_\Omega \abs{\nabla v}^{p} \dx \dt  +\norm{g}_{X^{p,q}}^{pq_1} + \norm{f}^{p'q_1}_{L^{p'q}(0,T;L^{p'q}(\Omega; \mathbb{R}^{n\times N}))} \\
& \qquad \qquad \qquad \qquad \qquad \qquad + \norm{g}_{X^{p,1}}^{p+\frac{p^2(q_1-1)}{2}} + \norm{f}_{L^{p'}(0,T;L^{p'}(\Omega; \mathbb{R}^{n\times N}))}^{p'+\frac{p'p(q_1-1)}{2}} \Bigg)<\infty.
\end{split}\label{basis1}
\end{equation}
Because $|\nabla v|^p$ is integrable
over $\Omega_T$, we have improved the integrability of $\nabla v$ to
$L^{p q_1}$. Returning to
\eqref{basisq}, we see that the right hand side finite and deduce
the a~prior estimate \eqref{basisqq}.
We next continue this process
iteratively for
$$
q_k:=\frac{(p-1)q_{k-1}}{\frac{q_{k-1}}{q}+p-2}
$$
such that $q_k<q_{k+1}<q$ and $q_k \to q$ as $k\to \infty$. We write
the corresponding dual exponents as
\[
\frac{1}{\big(\frac{q_{k-1}}{q}+(p-2)\big)\frac{q}{q_{k-1}}}+\frac{(p-2)}{\frac{q_{k-1}}q+(p-2)}=1,
\]
to see that
 \[
 \abs{\nabla v}^\frac{(p-2)p q_k}{p-1}\abs{\nabla g}^\frac{pq_k}{p-1}
 = \abs{\nabla v}^\frac{(p-2)pq_{k-1} }{\frac{q_{k-1}}{q}+p-2}\abs{\nabla g}^\frac{p q_{k-1}}{\frac{q_{k-1}}{q}+p-2}\leq \abs{\nabla v}^{pq_{k-1}}+\abs{\nabla g}^{pq}.
\]
We then combine this inequality with \eqref{algebra} to observe that
(using $q_k\le q$ again)
\begin{equation*}
\begin{split}
\abs{A(t,x,\nabla v)-A(t,x,\nabla u)}^{p'q_k}
&\le c(\abs{\nabla v}^{(p-2)p'q_k}\abs{\nabla g}^{p'q_k} +\abs{\nabla g}^{p q_k}+1)\\
&\le c\left(1+\abs{\nabla v}^{pq_{k-1}}+\abs{\nabla g}^{pq}\right).
\end{split}
\end{equation*}
Since we have assumed that $\abs{\nabla v}\in L^{pq_{k-1}}$, we can
now use the above inequality in \eqref{basis} to reach improved
integrability result $\abs{\nabla v} \in L^{pq_k}$. Consequently, we
can now use \eqref{basisq} with $\tilde{q}:=q_k$ that has now the
finite right hand side, to further get the uniform estimate
\eqref{basisqq} with $\tilde{q}:=q_k$
\begin{equation}
\begin{split}
\int_0^T\int_\Omega \abs{\nabla v}^{p q_k} \dx \dt &\leq c\Bigg(1 +\norm{g}_{X^{p,q}}^{pq_k} + \norm{f}^{p'q_k}_{L^{p'q}(0,T;L^{p'q}(\Omega; \mathbb{R}^{n\times N}))} \\
& \qquad \qquad \qquad + \norm{g}_{X^{p,1}}^{p+\frac{p^2(q_k-1)}{2}} + \norm{f}_{L^{p'}(0,T;L^{p'}(\Omega; \mathbb{R}^{n\times N}))}^{p'+\frac{p'p(q_k-1)}{2}} \Bigg).
\end{split}\label{basisqqq}
\end{equation}
We let $k\to \infty$ in \eqref{basisqqq} to deduce
\eqref{goal}, as required. The proof is complete.

\end{document}